\providecommand{\customgenericname}{}
\newcommand{\newcustomtheorem}[2]{%
  \newenvironment{#1}[1]
  {%
   \renewcommand\customgenericname{#2}%
   \renewcommand\theinnercustomgeneric{##1}%
   \innercustomgeneric
  }
  {\endinnercustomgeneric}
}
\newtheorem*{thrm}{Theorem}
\newtheorem{lem}{Lemma}
\newtheorem{prop}{Proposition}
\newtheorem{cor}{Corollary}
\newtheorem{definition}{Definition}
\newtheorem{question}{Question}
\newtheorem{remark}{Remark}
\newtheorem{ex}{Example}
\numberwithin{equation}{section}
\begin{document}
\title[Rescaled topological entropy]{Rescaled topological entropy}

\author{E. Rego}
\address{AGH University of Science and Technology
Krakow, Poland.}
\email{rego@agh.edu.pl}

\author[C. Rojas]{C. Rojas}
\address{Hangzhou International Innovation Institute of Beihang University,  
Hangzhou 311115, China.}
\email{tchivatze@gmail.com}

\author[X. Wen]{X. Wen}
\address{LMIB, Institute of Artificial Intelligence and School of Mathematical Science, Beihang University, Beijing, China.}
\email{wenxiao@buaa.edu.cn}

\keywords{Rescaled topological entropy, Vector field, Manifold}
\subjclass[2020]{Primary 37C10; Secondary 37B05}

\begin{abstract}
We prove that to any smooth vector field of a closed manifold it can be assigned a nonnegative number called {\em rescaled topological entropy} satisfying the following properties:
it is an upper bound for both the topological entropy and the rescaled metric entropy \cite{ww};
coincides with the topological entropy for nonsingular vector fields;
is positive for certain surface vector fields (in contrast to the topological entropy);
is invariant under rescaled topological conjugacy;
and serves as an upper bound for the growth rate of periodic orbits for rescaling expansive flows with dynamically isolated singular set.
Therefore, the rescaled topological entropy bounds such growth rates for $C^r$-generic rescaling (or $k^*$) expansive vector fields on closed manifolds.
\end{abstract}

\maketitle

%\tableofcontents

%%%%%%%%%%%%%%%%%%%%%%%%%%%%%%%%%%%%%%%%%%%%%%%%%%%%%%%%%%%%%%%%%%%%%%%%%%%%%

\section{Introduction}
\label{sec1}

\noindent
Every \( C^1 \) vector field $X$ on a closed surface has zero topological entropy $e(X)$ (see \cite{lg}). Still, such flows can display chaotic behavior \cite{u}.
This motivates the following question:

\begin{question}
\label{q1}
Is there any kind of "entropy" allowing to detect chaos for surface flows?
\end{question}

On the other hand, the topological entropy for expansive vector fields bounds the growth rate of periodic orbits from above \cite{bw}. More precisely,
\begin{equation}
\label{precatorio}
\limsup_{t\to\infty}\frac{1}t\log v(t)\leq e(X).
\end{equation}
where $v(t)$ is the number of periodic orbits of period less than or equal to $t$.
This motivates one more question:

\begin{question}
\label{q2}
Is \eqref{precatorio} true for "nearby expansive" vector fields?
\end{question}

To explore such questions we assign to any $C^1$ vector field a nonnegative number called {\em rescaled topological entropy}. We prove that this number satisfies the following properties: it is an upper bound for both the topological entropy and the rescaled metric entropy \cite{ww}; it coincides with the topological entropy for nonsingular vector fields; it is positive for certain surface vector fields; it is invariant under rescaled topological conjugacy; and it serves as an upper bound for the growth rate of periodic orbits for rescaling expansive flows with dynamically isolated singular set. In particular, the rescaled topological entropy bounds such growth rates for $C^r$-generic rescaling (or $k^*$) expansive vector fields on closed manifolds.
Let us state our result in a precise way.

Consider a {\em closed manifold} i.e. a compact connected boundaryless manifold of positive dimension equipped with a Riemannian metric.
Denote by $\|\cdot\|$ and $d$ the norm and the distance of the tangent bundle $TM$ and $M$ generated by the Riemannian metric respectively.

Let $X$ be a $C^1$ vector field on $M$.
Throughout, we assume \(X \neq 0\).
The {\em topological entropy} of \(X\) is defined by
\begin{equation}
\label{eqq1}
e(X)=h(\varphi_1),
\end{equation}
where \(\varphi_t\) is the flow generated by \(X\), \(\varphi_1\) is the time-one map, and \(h(T)\) is the classical topological entropy of a continuous transformation \(T:M \to M\) (as defined in \cite{akm}).

We say that $X$ is {\em rescaling expansive} \cite{wew} if for every $\epsilon>0$ there is $\delta>0$ such that if $x,y\in M$ satisfy
$d(\varphi_s(x),\varphi_{h(s)}(y))\leq\delta\|X(\varphi_s(x))\|$ for all $s\in\mathbb{R}$ and some increasing homeomorphism $h:\mathbb{R}\to\mathbb{R}$, then
$\varphi_{h(0)}(y)=\varphi_{s^*}(x)$ for some $s^*\in [-\epsilon,\epsilon]$.
We say that $x\in M$ is a singularity if $X(x)=0$.
Denote by $Sing(X)$ the set of singularities of $X$.
We say that $X$ is {\em nonsingular} if $Sing(X)=\emptyset$.
$x\in M$ is a {\em periodic point} if there is a minimal $t>0$ (called period) such that $\varphi_t(x)=x$.
A {\em periodic orbit} is the orbit $\{\phi_t(x)\mid t\in\mathbb{R}\}$ of a periodic point $x$, and its period is that of $x$.
Let $v(t)$ denote the number of periodic orbits of period $\leq t$.

Let $N$ be another closed manifold equipped with a $C^1$ vector field $Y$.
A map $h:N\to M$ is {\em rescaled continuous} if for every $\epsilon>0$ there is $\delta>0$ such that
if $y,y'\in N$ and $d(y,y')\leq \delta\|Y(y)\|$, then $d(h(y),h(y'))\leq \epsilon\|X(h(y))\|$.
We say that $h$ is a {\em rescaled homeomorphism} if it is bijective and both $h$ and its inverse $h^{-1}$ are rescaled continuous.
We say that $X$ and $Y$ are {\em rescaled topologically conjugate} if there there is a rescaled homeomorphism $h:N\to M$ satisfying
$\phi_t\circ h=h\circ \varphi_t$ for all $t\in\mathbb{R}$, where $\varphi$ is the flow of $Y$.

Let $\mu$ be a Borel probability measure of $M$ i.e. a $\sigma$-additive measure defined in the Borel $\sigma$-algebra of $M$ with $\mu(M)=1$.
Given $t\geq0$ and $\epsilon,\delta>0$ we
denote by $N^*_\mu(t,\epsilon,\delta)$ the minimal number of rescaled dynamical $(t,\epsilon)$-balls
needed to cover a subset of $\mu$-measure greater than $1-\delta$.
If $\mu(Sing(X))=0$, we define the {\em rescaled metric entropy} \cite{ww}
by
$$
e^*_\mu(X)=\lim_{\delta\to0}\lim_{\epsilon\to0}\limsup_{t\to\infty}\frac{1}t\log N_\mu^*(t,\epsilon,\delta).
$$

%\begin{prop}
%\label{rictus}

%\end{prop}

With these definitions we can state our result.

%\begin{customthm}{A}
%\label{pajuo}
\begin{thrm}
To any $C^1$ vector field $X$ of a closed manifold it can be assigned a nonnegative real number $e^*(X)$ (called {\em rescaled topological entropy}) satisfying the following properties:
\begin{enumerate}
\item
If $\mu$ is a Borel probability measure with $\mu(Sing(X))=0$, then
$e^*_\mu(X)\leq e^*(X)$.
\item
\( e(X) \leq e^*(X)\).
\item
\( e(X) = e^*(X) \) if \( X \) is nonsingular.  
\item
There exists a \( C^\infty \) vector field on the two-torus for which \( e^*(X) > 0 \). In particular, the identity \( e(X) =e^*(X) \) is not true in general.
\item
If $X$ is rescaled topologically conjugate to another $C^1$ vector field $Y$, then $e^*(X)=e^*(Y)$. 
\item
If $X$ is rescaling expansive and $Sing(X)$ is dynamically isolated, then
\begin{equation}
\label{precatorinho}
\limsup_{t\to\infty}\frac{1}t\log v(t)\leq e^*(X).
\end{equation}
\end{enumerate}
\end{thrm}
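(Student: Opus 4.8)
The plan is to define $e^*(X)$ by copying Bowen's spanning/separated description of topological entropy, but with \emph{rescaled} dynamical balls. For $x\in M$, $t\geq 0$ and $\epsilon>0$ put
\[
\Gamma_{t,\epsilon}(x)=\{y\in M:\ d(\varphi_s(x),\varphi_s(y))\leq \epsilon\,\|X(\varphi_s(x))\|\ \text{ for all } s\in[0,t]\},
\]
let $r^*(t,\epsilon)$ be the least number of such sets needed to cover $M$, and set
\[
e^*(X)=\lim_{\epsilon\to0}\limsup_{t\to\infty}\tfrac1t\log r^*(t,\epsilon).
\]
Since $r^*(t,\epsilon)$ is nonincreasing in $\epsilon$, the outer limit exists in $[0,\infty]$; finiteness (hence that $e^*(X)$ is a genuine nonnegative real) is obtained by splitting $M$ into a compact region where $\|X\|\geq c>0$, treated as in the nonsingular case, and a neighborhood of $Sing(X)$ where the $C^1$ linearization gives scale-invariant control of the size of the rescaled balls. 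The same $C^1$ control shows that the discrepancy between the $\|X\|$-weights at the centre and at nearby points washes out as $\epsilon\to0$, so a separated-set version defines the same number and I use the two interchangeably.

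Properties (1)--(3) and (5) are comparison arguments. For (1), any cover of $M$ by rescaled $(t,\epsilon)$-balls covers every subset, so $N^*_\mu(t,\epsilon,\delta)\leq r^*(t,\epsilon)$, and passing to the limits in the order defining $e^*_\mu$ gives $e^*_\mu(X)\leq e^*(X)$. For (2), with $C:=\max_M\|X\|$ a standard $(t,\epsilon)$-separated set is rescaled $(t,\epsilon/C)$-separated, whence $e(X)=h(\varphi_1)\leq e^*(X)$. For (3), if $X$ is nonsingular then $c:=\min_M\|X\|>0$ and a rescaled $(t,\epsilon)$-separated set is standard $(t,\epsilon c)$-separated, giving the reverse inequality $e^*(X)\leq e(X)$ and hence equality. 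For (5), rescaled continuity of $h$, applied uniformly at each $\varphi_s(y)$ through the conjugacy $h\circ\varphi_s=\phi_s\circ h$, carries a sufficiently fine rescaled ball of $Y$ into a rescaled $(t,\epsilon)$-ball of $X$; thus $h$ sends rescaled spanning sets to rescaled spanning sets, giving $e^*(X)\leq e^*(Y)$, and equality follows by repeating the argument for $h^{-1}$.

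For the torus example in (4) I would start from a minimal irrational linear flow on $T^2$ and create a single singularity by a time change $Y=fX$, where $f\geq0$ is $C^\infty$ and vanishes only at one point $p$. This preserves the orbit structure but forces orbits to spend unbounded real time near $p$. The point is that after dividing distances by $\|Y\|=f\|X\|$, which is tiny near $p$, this long residence time unfolds into exponentially many rescaled-distinguishable orbit segments; estimating $r^*(t,\epsilon)$ (or a separated set) directly from the behaviour at $p$ then yields a positive growth rate. Converting the logarithmic slow-down into exponential growth of the rescaled count is the technical heart of this item.

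Property (6) is the main statement, and its proof adapts the Bowen--Walters argument behind \eqref{precatorio} to the rescaled, singular setting. Rescaling expansiveness yields $\epsilon_0>0$ such that two full orbits that stay $\epsilon_0\|X\|$-close after an increasing reparametrization must lie on a common orbit up to a small time shift. The scheme is: (i) pick one point on each periodic orbit of period $\leq t$; (ii) use that $Sing(X)$ is dynamically isolated to confine these orbits to a region where $\|X\|\geq c>0$, so the rescaling does not degenerate along them; (iii) observe that, for orbits of period $\leq t$, closeness over the window $[0,t]$ essentially determines the orbits over all of $\mathbb{R}$, so that rescaling expansiveness can be brought to bear to force distinct periodic orbits to rescaled-separate within time $t$; and (iv) conclude $v(t)\lesssim s^*(t,\epsilon)$ and let $t\to\infty$, then $\epsilon\to0$. \textbf{The main obstacle is step (iii):} in the flow setting one must separate \emph{orbits} rather than points, which forces careful bookkeeping of the reparametrization from the definition of rescaling expansiveness (to neutralise the flow-direction degeneracy inside a single orbit), while the dynamical isolation of $Sing(X)$ is exactly what guarantees the uniform lower bound on $\|X\|$ needed to turn expansive separation into \emph{rescaled} separation. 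Making these two mechanisms cooperate uniformly in $t$ is the crux of the whole theorem.
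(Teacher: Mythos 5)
There is a genuine gap, and it starts at the definition itself. You require $r^*(t,\epsilon)$ to count rescaled balls covering \emph{all} of $M$, but such a cover cannot be finite once $Sing(X)\neq\emptyset$ and $\epsilon$ is small. Indeed, let $L$ be a Lipschitz constant for $X$; then $\|X(z)\|\leq L\,d(z,\sigma)$ for every $\sigma\in Sing(X)$ and $z\in M$. If $y$ lies in your ball $\Gamma_{t,\epsilon}(x)$ with $x\in M^*$, then $d(y,\sigma)\geq d(x,\sigma)-\epsilon\|X(x)\|\geq (1-\epsilon L)\,d(x,\sigma)$, so for $\epsilon<1/L$ every rescaled ball centered in $M^*$ keeps a definite proportional distance from $Sing(X)$ (and contains no singularity), while a ball centered at a singularity is the singleton $\{\sigma\}$. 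Consequently any finite union of rescaled balls misses an entire punctured neighborhood of $Sing(X)$, so $r^*(t,\epsilon)=\infty$: the ``scale-invariant control'' near singularities that you invoke for finiteness is precisely the obstruction, not the cure. Your $e^*(X)$ is therefore $+\infty$ exactly in the singular cases the theorem is about (items (4) and (6)). The paper's definition is different in an essential way: spanning sets have centers in $M^*$ and are only required to cover the compact set $M_\epsilon=\{x:\|X(x)\|\geq\epsilon\}$, equivalently $e^*(X)=\sup_K e^*(X,K)$ over compact $K\subset M^*$; finiteness is then a real argument (a lattice/Lipschitz estimate giving $e^*(X)\leq 2dL$), not a remark. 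This repair also breaks your proof of item (2): once separated sets are confined to compact subsets of $M^*$, a standard $(t,\epsilon)$-separated set realizing $e(X)$ need not live in any such subset (entropy can concentrate near $Sing(X)$), so dividing by $\max_M\|X\|$ is not enough. The paper instead routes (2) through the variational principle: ergodic invariant measures charging $Sing(X)$ are point masses with zero entropy, and for measures with $\mu(Sing(X))=0$ one compares $e_\mu(X)\leq e^*_\mu(X)$ and applies item (1), whose proof in turn needs the localized entropies $e^*_\mu(X,K)$ and their sup-decomposition, none of which appear in your outline.

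Item (4) is also unproven as proposed, and the mechanism you describe is likely insufficient. For a time-changed irrational flow $Y=fX$ with a single zero $p$, pairwise rescaled separation of points on a transversal happens at closest approach to $p$: points whose orbits pass at distances $\delta_1<\cdots<\delta_N$ separate only when consecutive gaps exceed $\epsilon$ times the speed at closest approach, and both the admissible number $N$ of such points and the time $t$ needed for all the passages scale like the passage time itself; this yields $N$ growing \emph{linearly} in $t$ (for any vanishing order of $f$, including flat), hence zero exponential rate. You flagged exactly this step as the ``technical heart,'' i.e.\ it is missing, and the heuristic suggests it cannot be filled for this example. The paper's construction is engineered to avoid single-passage counting altogether: the field is $\rho(x)\,(1,0)$ on the torus with $\rho=-x$ near a whole \emph{circle} of singularities, so every orbit starting on a transversal circle $C$ has speed exactly $e^{-n}$ at time $n$ (exponential decay in time, uniformly over $C$), while the horizontal flow preserves vertical distances; the $2^n$ equally spaced points of $C$ then satisfy $d^*_n(p,q)\geq L(e/2)^n$, and the separated-set criterion (Proposition 1 of the paper) gives $e^*(X)\geq\log 2$ with no further estimate. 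Your outline of item (6) does follow the paper's actual route (Bowen--Walters adapted via the characterization of rescaling expansiveness from Wen--Yu, with dynamical isolation ensuring each periodic orbit meets some $M_\delta$), but note the paper only needs each periodic orbit to \emph{intersect} $M_\delta$, not to be confined there, and the concrete device that resolves your step (iii) --- extending closeness on $[0,t]$ to bi-infinite sequences using the two periods --- still has to be supplied.
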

%\end{customthm}

Item (1) is a short of half-variational principle for the rescaled entropy.
Items (3) and (4) say that the rescaled topological entropy is a genuine extension of the topological entropy.
Item (5) says that the rescaled topological entropy is an invariant under rescaled topological conjugacy.
Item (6) gives a positive answer for Question \ref{q2} for rescaled expansive vector fields with dynamically isolated singular set but replacing the topological entropy by the rescaled topological one.

We state two corollaries based on the following definition.
We say that $X$ is {\em k*-expansive} \cite{k} if for every $\epsilon>0$ there is $\delta$ such that if $x,y\in M$ satisfy
$d(\phi_s(x),\phi_{h(s)}(y))\leq \delta$ for every $s\in\mathbb{R}$ and some increasing homeomorphism $h:\mathbb{R}\to\mathbb{R}$ fixing $0$, then $\phi_{h(s_0)}(y)\in \phi_{[s_0-\epsilon,s_0+\epsilon]}(x)$.

Since every k*-expansive vector field is rescaling expansive \cite{rwy}, we have the following corollary.

\begin{cor}
Let $X$ be a $C^1$ vector field of a closed manifold.
If $X$ is k*-expansive and $Sing(X)$ is dynamically isolated, then
\eqref{precatorinho} holds.
\end{cor}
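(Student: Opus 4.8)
The plan is to obtain this corollary as an immediate consequence of item (6) of the main Theorem, using only the cited comparison between the two notions of expansiveness. The corollary differs from item (6) solely by replacing the hypothesis ``rescaling expansive'' with the hypothesis ``$k^*$-expansive'', while keeping the assumption that $Sing(X)$ is dynamically isolated; so everything reduces to verifying that the stronger-looking conclusion still applies under the new hypothesis.

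Concretely, first I would invoke the result of \cite{rwy} that every $k^*$-expansive $C^1$ vector field on a closed manifold is rescaling expansive. Granting this, the hypothesis that $X$ is $k^*$-expansive upgrades to ``$X$ is rescaling expansive'', and the hypothesis that $Sing(X)$ is dynamically isolated carries over verbatim. Hence $X$ satisfies precisely the two assumptions required in item (6) of the Theorem, and applying that item yields \eqref{precatorinho} directly. That is the entire argument: invoke \cite{rwy}, then invoke item (6).

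As for the main obstacle, at the level of the corollary there is essentially none, since all the real work is packaged into item (6) and into the implication ``$k^*$-expansive $\Rightarrow$ rescaling expansive''. The only delicate point, were one to make the argument self-contained, is that implication itself: $k^*$-expansiveness is phrased with a uniform tolerance $\delta$ in $d(\phi_s(x),\phi_{h(s)}(y))\le\delta$, whereas rescaling expansiveness uses the weighted tolerance $\delta\|X(\varphi_s(x))\|$. Reconciling the two requires controlling $\|X\|$ along orbits—exactly where the compactness of $M$ together with the dynamical isolation of $Sing(X)$ would enter—but since this comparison is precisely what is established in \cite{rwy}, I would cite it rather than reprove it.
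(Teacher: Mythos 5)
Your proposal is correct and is exactly the paper's argument: the paper derives this corollary in one line by citing \cite{rwy} for the implication ``$k^*$-expansive $\Rightarrow$ rescaling expansive'' and then applying Item (6) of the Theorem. (Your closing speculation that dynamical isolation of $Sing(X)$ enters into that implication is inessential and not needed, since you correctly cite \cite{rwy} rather than reprove it.)
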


Now, recall that the space of \( C^r \) vector fields on a closed manifold \( M \), for \( r \geq 1 \), is equipped with the \( C^r \)-topology \cite{pdm}.  
A property is said to hold for {\em \( C^r \) generic vector fields on \( M \)} if there exists a residual subset—that is, a countable intersection of open and dense sets—of \( C^r \) vector fields, all of which satisfy the property.
Also recall that a singularity \( x \) of a \( C^r \) vector field \( X \) is hyperbolic if the linear operator \( DX(x) \) has no eigenvalues on the imaginary axis.

It follows from the Kupka-Smale theorem \cite{pdm} that all singularities of a \( C^r \) generic vector field are hyperbolic. Since the hyperbolicity of all singularities implies that the set of singularities is dynamically isolated, we obtain the following corollary.

\begin{cor}
A $C^r$ generic rescaling (or k*) expansive vector field $X$ of a closed manifold satisfies \eqref{precatorinho}.
\end{cor}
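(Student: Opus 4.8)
The plan is to deduce this statement directly from item (6) of the Theorem, together with the structural observations recorded in the two paragraphs preceding it, so that no genuinely new dynamics is needed: the whole point is to organize the genericity bookkeeping. The key observation is that, beyond (rescaling or $k^*$) expansivity, the only hypothesis of item (6) is that $Sing(X)$ be dynamically isolated, and this hypothesis is supplied automatically on a residual set of vector fields.

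First I would invoke the Kupka--Smale theorem \cite{pdm} to fix a residual subset $\mathcal R$ of the space of $C^r$ vector fields on $M$ such that every $X \in \mathcal R$ has all of its singularities hyperbolic. As recalled in the excerpt, hyperbolicity of every singularity forces $Sing(X)$ to be dynamically isolated; hence every $X \in \mathcal R$ satisfies the dynamical-isolation hypothesis appearing in item (6) of the Theorem and in Corollary 1. This is the only place where genericity enters the argument.

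Next I would split into the two cases indicated by the parenthetical ``(or $k^*$)''. If $X \in \mathcal R$ is rescaling expansive, then $X$ meets both hypotheses of item (6) --- rescaling expansivity and dynamical isolation of $Sing(X)$ --- and item (6) yields \eqref{precatorinho} at once. If instead $X \in \mathcal R$ is $k^*$-expansive, I would first apply \cite{rwy}, by which every $k^*$-expansive vector field is rescaling expansive, reducing to the previous case; equivalently, one applies Corollary 1 directly. In either case \eqref{precatorinho} holds for every expansive $X$ in the residual set $\mathcal R$, which is precisely the assertion that the property holds $C^r$-generically.

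The step carrying all the weight is item (6) itself (established elsewhere in the paper), relative to which this corollary is only a formal packaging; I would not expect any serious obstacle here. The one point I would be careful to state cleanly is the reading of a ``$C^r$-generic (rescaling or $k^*$) expansive vector field'': I would interpret the claim as the conditional property ``$X$ expansive $\Rightarrow$ $X$ satisfies \eqref{precatorinho}'' holding throughout the residual set $\mathcal R$, so that no residuality of the expansive vector fields themselves is asserted or required. With this reading the argument above is complete, and the role of genericity is confined to producing hyperbolic --- hence dynamically isolated --- singularities via Kupka--Smale.
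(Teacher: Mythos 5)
Your proposal is correct and matches the paper's own argument: the paper likewise invokes the Kupka--Smale theorem to get hyperbolic (hence dynamically isolated) singularities on a residual set, then applies item (6) of the Theorem, using \cite{rwy} (via Corollary 1) to cover the $k^*$-expansive case. Your careful reading of ``generic expansive'' as a conditional property over a residual set of all $C^r$ vector fields is exactly the intended interpretation.
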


Note that \eqref{precatorinho} reduces to \eqref{precatorio} for nonsingular vector fields including the $N$-expansive or CW-expansive vector fields
\cite{acop}, \cite{acp}.
It seems that \eqref{precatorio} holds for the former vector fields but we don't know if this is true for the latter ones.

The remainder of the paper is divided as follows.
In Section \ref{hit}, we define $e^*(X)$ and give some equivalent definitions.
In Section \ref{hop}, we study the rescaled metric entropy.
In Section \ref{hup}, we prove the theorem.

\section{Definition and equivalences}
\label{hit}

\noindent
In this section, we define the number $e^*(X)$ required in the theorem.

\subsection{Bowen-Dinaburg formula}
It follows from \cite{b,d} that the topological entropy of a $C^1$ vector field $X$ of a closed manifold $M$ is given by
\begin{equation}
\label{bolso}
e(X)=\lim_{\epsilon \to 0} \limsup_{t \to \infty} \frac{1}{t}\log R(t,\epsilon),
\end{equation}
where \(R(t,\epsilon)\) is the minimal cardinality of a \emph{\((t,\epsilon)\)-spanning set}, i.e., a set \(F \subset M\) such that
\[
M \;=\; B(F,t,\epsilon)
\quad\text{with}\quad
B(F,t,\epsilon)\;=\;\bigcup_{x \in F} B(x,t,\epsilon).
\]
Here
\begin{equation}
\label{kubrick}
B(x,t,\epsilon)
\;=\;
\{\,y \in M \,:\, d_t(x,y) < \epsilon \}
\quad\quad(x \in M)
\end{equation}
is the {\em dynamical \((t,\epsilon)\)-ball}
induced by the one-parameter family of metrics
\[
d_t(x,y)\;=\;\sup_{0 \le s \le t} d\bigl(\varphi_s(x), \varphi_s(y)\bigr)
\quad(t \ge 0,\, x,y\in M).
\]

\subsection{Definition}
Set $M^*=M\setminus Sing(X)$. Replace \(d_t\) and the dynamical $(t,\epsilon)$-balls by the family of \emph{rescaled} distances
\[
d^*_t(x,y)
\;=\;
\sup_{0 \le s \le t} \frac{d\bigl(\varphi_s(x),\,\varphi_s(y)\bigr)}{\|X(\varphi_s(x))\|}
\quad\quad (t \ge 0,\, x\in M^*,\, y\in M)
\]
and the \emph{rescaled dynamical \((t,\epsilon)\)-balls}
\[
B^*(x,t,\epsilon)
\;=\;
\{\,y \in M : d^*_t(x,y) < \epsilon\}
\quad\quad(\forall\, x \in M^*)
\]
respectively.

If $\delta>0$ we write $M_\delta=\{x\in M:\|X(x)\|\geq\delta\}$
(we write $M^*(X)$ and
$M_\delta(X)$ to indicate dependence on $X$).
Call \(F \subset M\) \emph{rescaled \((t,\epsilon)\)-spanning set} if \(F \subset M^*\) and
\[
M_\epsilon \;\subset\; B^*(F,t,\epsilon)
\quad\text{where}\quad
B^*(F,t,\epsilon)\;=\;\bigcup_{x \in F} B^*(x,t,\epsilon).
\]
Clearly \(M_\epsilon\) is compact and \(B^*(x,t,\epsilon)\) is an open neighborhood of \(x\) for all \(x \in M^*\). So, the number
\[
R^*(t,\epsilon)
\;=\;
\min \,\bigl\{\,
\mathrm{card}(F)
:\,F \text{ is a rescaled \((t,\epsilon)\)-spanning set}
\bigr\}
\]
is finite for all \(t \ge 0\) and \(\epsilon > 0\), where \(\mathrm{card}(F)\) denotes the cardinality of \(F\) (when necessary we write
$R^*(t,\epsilon,X)$ to indicate dependence on $X$).
Then, we can introduce the following definition (equivalent to the one in the second author thesis \cite{r}):

\begin{definition}
\label{dino}
The \emph{rescaled topological entropy} of a \(C^1\) vector field \(X\) on a closed manifold \(M\) is defined by
\[
e^*(X)
\;=\;
\lim_{\epsilon \to 0}\,\limsup_{t \to \infty}
\frac{1}{t} \log R^*(t,\epsilon).
\]
\end{definition}

\subsection{Equivalent definitions}
In the sequel, we present some useful equivalent definitions.
For this we need Lemma 2.1 in \cite{wy} restated below.

\begin{lem}
\label{wendy}
For every $C^1$ vector field $X$ of a closed manifold $M$ there is $r_0>0$ such that
if $a\in M^*$ and $b\in M$ satisfy
\begin{equation}
\label{metanfeta}
d(a,b)\leq r_0\|X(a)\|\quad\mbox{ then }\quad \frac{1}2\|X(a)\|\leq \|X(b)\|\leq 2\|X(a)\|.
\end{equation}
\end{lem}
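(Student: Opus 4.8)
The plan is to reduce the whole statement to a single global Lipschitz estimate for the scalar function $f:M\to\mathbb{R}$ given by $f(x)=\|X(x)\|$. Indeed, \eqref{metanfeta} only compares $\|X(a)\|$ with $\|X(b)\|$ when $b$ lies in a ball about $a$ of radius proportional to $\|X(a)\|$, so once $f$ is known to be Lipschitz the conclusion is immediate. If $L>0$ is a Lipschitz constant for $f$ and we set $r_0=\tfrac{1}{2L}$, then any $a\in M^*$ and $b\in M$ with $d(a,b)\le r_0\|X(a)\|$ satisfy
\[
\bigl|\,\|X(b)\|-\|X(a)\|\,\bigr|=|f(b)-f(a)|\le L\,d(a,b)\le L r_0\|X(a)\|=\tfrac12\|X(a)\|,
\]
whence $\tfrac12\|X(a)\|\le\|X(b)\|\le\tfrac32\|X(a)\|\le 2\|X(a)\|$, which is exactly \eqref{metanfeta}.

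So the real content is to show that $f=\|X\|$ is globally Lipschitz on the compact Riemannian manifold $M$. The subtlety is that $X(a)$ and $X(b)$ live in different tangent spaces, so one cannot simply write $\bigl|\,\|X(b)\|-\|X(a)\|\,\bigr|\le\|X(b)-X(a)\|$. I would handle this with a finite atlas: cover $M$ by finitely many relatively compact charts in which $TM$ is trivialized, so that in each chart $X$ becomes a $C^1$ map $X_\alpha$ into $\mathbb{R}^n$ and the Riemannian norm becomes a continuously varying family of inner-product norms $\|\cdot\|_{g(x)}$ that are uniformly comparable to the Euclidean one. On each chart $X_\alpha$ has bounded derivative and is therefore Lipschitz, while the metric coefficients $g_{ij}$ are smooth, hence Lipschitz; splitting
\[
\bigl|\,\|X_\alpha(x)\|_{g(x)}-\|X_\alpha(y)\|_{g(y)}\,\bigr|
\le\bigl|\,\|X_\alpha(x)\|_{g(x)}-\|X_\alpha(y)\|_{g(x)}\,\bigr|
+\bigl|\,\|X_\alpha(y)\|_{g(x)}-\|X_\alpha(y)\|_{g(y)}\,\bigr|
\]
bounds the first term by the Lipschitz constant of $X_\alpha$ (via the reverse triangle inequality for the fixed norm $\|\cdot\|_{g(x)}$ and the comparison with the Euclidean norm), and the second term by the variation of the metric times $\sup_M\|X\|<\infty$. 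Taking the maximum of the finitely many local constants and invoking a Lebesgue number of the cover to pass from local chart distances to the global Riemannian distance $d$ yields a single Lipschitz constant $L$ for $f$ on all of $M$.

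The main obstacle is precisely this cross-chart Lipschitz estimate, and it is worth stressing why it survives near $Sing(X)$: as $a$ approaches a singularity one has $\|X(a)\|\to0$, so the admissible radius $r_0\|X(a)\|$ collapses, and a naive uniform-continuity argument would provide no control in that limit. The global Lipschitz bound is exactly what makes the estimate scale like $\|X(a)\|$ and hence remain uniform up to the singular set. Alternatively, one may phrase the Lipschitz step through parallel transport of $X(a)$ along a minimizing geodesic into $T_bM$ together with the fact that $x\mapsto X(x)$ has bounded covariant derivative on the compact manifold; either route produces the same constant $L$, after which the choice $r_0=\tfrac{1}{2L}$ completes the proof.
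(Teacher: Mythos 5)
Your proof is correct, but there is no in-paper argument to compare it with: the paper imports Lemma \ref{wendy} verbatim as Lemma 2.1 of \cite{wy} and gives no proof, so your self-contained argument is necessarily a different (and more complete) route. Your reduction is the natural one: show that $f(x)=\|X(x)\|$ is globally Lipschitz on the compact manifold $M$ with some constant $L$, set $r_0=\tfrac{1}{2L}$, and conclude $\bigl|\,\|X(b)\|-\|X(a)\|\,\bigr|\le L\,d(a,b)\le\tfrac12\|X(a)\|$, which even yields the sharper upper bound $\tfrac32\|X(a)\|\le 2\|X(a)\|$. You also correctly isolate the one genuine subtlety---that $X(a)$ and $X(b)$ live in different tangent spaces---and both of your resolutions are sound: in the chart-by-chart splitting, the metric-variation term obeys $\bigl|\,\|v\|_{g(x)}-\|v\|_{g(y)}\,\bigr|\le C\,\|v\|\,|x-y|$ because the uniform ellipticity of $g$ controls the denominator in $|\sqrt A-\sqrt B|=|A-B|/(\sqrt A+\sqrt B)$ (so you are not relying on the false claim that a square root of a Lipschitz function is Lipschitz), and the alternative via parallel transport along a minimizing geodesic plus $\sup_M\|\nabla X\|<\infty$ is even cleaner. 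Two routine points should be made explicit in a write-up: (i) the Lebesgue-number argument only covers pairs with $d(x,y)$ below the Lebesgue number $\ell$; for $d(x,y)\ge\ell$ one uses the trivial bound $|f(x)-f(y)|\le 2\sup_M\|X\|\le (2\sup_M\|X\|/\ell)\,d(x,y)$; (ii) inside a chart one needs the standard bi-Lipschitz comparison between the chart Euclidean distance and the Riemannian distance $d$ on relatively compact subsets. Finally, your closing remark is exactly the right emphasis, and is why the lemma has content near $Sing(X)$: the error $L\,d(a,b)$ scales like $\|X(a)\|$ on the ball of radius $r_0\|X(a)\|$, so the estimate stays uniform as $\|X(a)\|\to 0$, where mere uniform continuity of $\|X\|$ would give nothing.
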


Next, we observe that since $d_t$ is a metric for all $t\geq0$, the dynamical $(t,\epsilon)$-balls satisfy the following property:
$$
y\in B(x,t,\frac{\epsilon}2)\implies B(x,t,\frac{\epsilon}2)\subset B(y,t,\epsilon).
$$
The lemma below is the analogous property for the rescaled dynamical $(t,\epsilon)$-balls.

\begin{lem}
\label{symmetry}
For every $C^1$ vector field $X$ of a closed manifold $M$
there is $r_0>0$ such that if $t\geq0$, $0<\epsilon<r_0$, $x\in M^*$ and
$$
y\in B^*(x,t,\frac{\epsilon}4)\quad\Longrightarrow\quad
B^*(x,t,\frac{\epsilon}4)\subset B^*(y,t,\epsilon).
$$
\end{lem}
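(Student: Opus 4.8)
The plan is to mimic the classical metric-space argument recalled just before the statement, but with the essential caveat that the rescaled ``distance'' $d^*_t$ is \emph{not} symmetric: the denominator $\|X(\varphi_s(x))\|$ depends on the first entry. Consequently, passing from balls centered at $x$ to balls centered at $y$ requires comparing $\|X(\varphi_s(y))\|$ with $\|X(\varphi_s(x))\|$ along the whole orbit segment, and this comparison is exactly what Lemma \ref{wendy} provides. I would therefore let $r_0>0$ be the constant of Lemma \ref{wendy} and fix $t\geq0$, $0<\epsilon<r_0$, $x\in M^*$ and $y\in B^*(x,t,\frac{\epsilon}4)$.

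The first step is to establish a uniform norm comparison along $[0,t]$. Since $x\in M^*$ and singularities are fixed points of the flow, $\varphi_s(x)\in M^*$ for every $s$, so $d^*_t(x,y)$ is well defined; unfolding $y\in B^*(x,t,\frac{\epsilon}4)$ gives $d(\varphi_s(x),\varphi_s(y))<\frac{\epsilon}4\|X(\varphi_s(x))\|\leq r_0\|X(\varphi_s(x))\|$ for all $s\in[0,t]$. Applying Lemma \ref{wendy} with $a=\varphi_s(x)$ and $b=\varphi_s(y)$ yields $\tfrac12\|X(\varphi_s(x))\|\leq\|X(\varphi_s(y))\|\leq 2\|X(\varphi_s(x))\|$ for every such $s$. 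Taking $s=0$ in particular shows $\|X(y)\|>0$, i.e.\ $y\in M^*$, so that the ball $B^*(y,t,\epsilon)$ is itself well defined.

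For the second step I would take an arbitrary $z\in B^*(x,t,\frac{\epsilon}4)$ and estimate, for each $s\in[0,t]$, using the triangle inequality together with the two ball memberships, $d(\varphi_s(y),\varphi_s(z))\leq d(\varphi_s(y),\varphi_s(x))+d(\varphi_s(x),\varphi_s(z))<\frac{\epsilon}2\|X(\varphi_s(x))\|$. Dividing by $\|X(\varphi_s(y))\|$ and invoking the lower bound $\|X(\varphi_s(y))\|\geq\tfrac12\|X(\varphi_s(x))\|$ from the previous step gives $d(\varphi_s(y),\varphi_s(z))/\|X(\varphi_s(y))\|<\epsilon$ for every $s\in[0,t]$. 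Since this integrand is continuous in $s$ on the compact interval $[0,t]$, its supremum $d^*_t(y,z)$ is attained and hence is also strictly less than $\epsilon$, so $z\in B^*(y,t,\epsilon)$, which is the desired inclusion.

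The content of the proof is thus entirely concentrated in the norm-comparison step: the factors $4$ and $2$ in the statement are calibrated precisely so that the factor $2$ lost in the denominator (from Lemma \ref{wendy}) and the factor $2$ lost in summing two radii of size $\frac{\epsilon}4$ combine to give exactly $\epsilon$. The only point requiring care — and the main potential obstacle — is ensuring the denominators stay under control uniformly in $s$; this is why the radius $\frac{\epsilon}4$ must be kept below $r_0$, so that Lemma \ref{wendy} applies simultaneously along the entire orbit segment rather than just pointwise.
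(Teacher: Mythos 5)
Your proposal is correct and follows essentially the same argument as the paper: take $r_0$ from Lemma \ref{wendy}, use it to get the uniform comparison $\frac{1}{2}\|X(\varphi_s(x))\|\leq\|X(\varphi_s(y))\|\leq 2\|X(\varphi_s(x))\|$ along $[0,t]$, and combine the triangle inequality with this factor-of-$2$ conversion so that the two radii $\frac{\epsilon}{4}$ yield the radius $\epsilon$ around $y$. The only (harmless) differences are the order in which you apply the triangle inequality and the norm comparison, and your explicit compactness remark justifying that the supremum defining $d^*_t(y,z)$ stays strictly below $\epsilon$ — a point the paper leaves implicit.
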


\begin{proof}
Take $r_0$ as in Lemma \ref{wendy} and $y\in B^*(x,t,\frac{\epsilon}4)$.
Then, $\|X(y)\|\geq \frac{1}2\|X(x)\|>0$ and so $y\in M^*$.
Now, take $t\geq0$, $0<\epsilon<r_0$ and $w\in B^*(x,t,\frac{\epsilon}4)$.
Since $y\in B^*(x,t,\frac{\epsilon}4)$, $\|X(\varphi_s(x))\|\leq 2\|X(\varphi_s(y))\|$ for all $0\leq s\leq t$ by Lemma \ref{wendy} thus
$$
d(\varphi_s(y),\varphi_s(x))<\frac{\epsilon}2\|X(\varphi_s(y))\|,\quad\quad\forall 0\leq s\leq t.
$$
On the other hand, $w\in B^*(x,t,\frac{\epsilon}4)$ so $d(\varphi_s(x),\varphi_s(w))<\frac{\epsilon}4\|X(\varphi_s(x))\|$ for all $0\leq s\leq t$ thus
$$
d(\varphi_s(x),\varphi_s(w))<\frac{\epsilon}2\|X(\varphi_s(y))\|,\quad\quad\forall 0\leq s\leq t.
$$
Then,
\begin{eqnarray*}
d(\varphi_s(y),\varphi_s(w))&\leq& d(\varphi_s(y),\varphi_s(x))+d(\varphi_s(x),\varphi_s(w))\\
&<&\frac{\epsilon}2\|X(\varphi_s(y))\|+\frac{\epsilon}2\|X(\varphi_s(y))\|\\
&=&\epsilon\|X(\varphi_s(y))\|,\quad\quad\forall 0\leq s\leq t,
\end{eqnarray*}
proving $w\in B^*(y,t,\epsilon)$. Therefore, $B^*(x,t,\frac{\epsilon}4)\subset
B^*(y,t,\epsilon)$ ending the proof.
\end{proof}

Now, we give the first equivalent definition.
Let $X$ be a $C^1$ vector field of a closed manifold.
Given $t\geq0$ and $\epsilon>0$, we say that $F\subset M$ is
a {\em rescaled $(t,\epsilon,K)$-spanning set} if $F\subset M^*$ and
$K\subset B^*(F,t,\epsilon)$.
Let $R^*(t,\epsilon,K)$ be the minimal cardinality of a rescaled $(t,\epsilon,K)$-spanning set.
Define
\begin{equation}
\label{bomba}
e^*(X,K)=\lim_{\epsilon\to0}\limsup_{t\to\infty}\frac{1}t\log R^*(t,\epsilon,K).
\end{equation}

Then, we have the following lemma.

\begin{lem}
\label{fria}
For every $C^1$ vector field $X$ of a closed manifold one has
$$
e^*(X)=\sup_{K}e^*(X,K),
$$
where the supremum is over the compact subsets $K\subset M^*$.
\end{lem}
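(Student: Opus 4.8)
The plan is to establish the two inequalities $\sup_K e^*(X,K)\le e^*(X)$ and $e^*(X)\le \sup_K e^*(X,K)$ separately, the whole argument resting on two elementary monotonicity observations together with the identity $R^*(t,\epsilon)=R^*(t,\epsilon,M_\epsilon)$, which holds because a rescaled $(t,\epsilon)$-spanning set is by definition exactly a rescaled $(t,\epsilon,M_\epsilon)$-spanning set. The monotonicity facts I would record first are: (i) for a fixed compact $K\subset M^*$ the number $R^*(t,\epsilon,K)$ is non-increasing in $\epsilon$ (smaller balls require at least as many centers), so $\limsup_{t\to\infty}\frac1t\log R^*(t,\epsilon,K)$ is non-increasing in $\epsilon$ and hence $e^*(X,K)=\sup_{\epsilon>0}\limsup_{t\to\infty}\frac1t\log R^*(t,\epsilon,K)$; and (ii) as $\epsilon\downarrow 0$ both the radius shrinks and the target set $M_\epsilon$ grows, so $R^*(t,\epsilon)$ is non-increasing in $\epsilon$ and $e^*(X)=\sup_{\epsilon>0}\limsup_{t\to\infty}\frac1t\log R^*(t,\epsilon)$. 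I would also note at the outset that each $M_\epsilon$ is a compact subset of $M^*$, hence an admissible $K$ in the supremum.

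For the inequality $\sup_K e^*(X,K)\le e^*(X)$, I would fix a compact $K\subset M^*$ and set $m=\min_{x\in K}\|X(x)\|>0$, so that $K\subset M_m\subset M_\epsilon$ for every $\epsilon\le m$. Then any rescaled $(t,\epsilon)$-spanning set (which covers $M_\epsilon$) automatically covers $K$, giving $R^*(t,\epsilon,K)\le R^*(t,\epsilon)$ for all $\epsilon\le m$. Passing to $\limsup_{t\to\infty}$ and then to the supremum over $0<\epsilon\le m$, which by monotonicity (i) already realizes $e^*(X,K)$ and by (ii) realizes $e^*(X)$, yields $e^*(X,K)\le e^*(X)$; taking the supremum over $K$ finishes this direction.

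For the reverse inequality $e^*(X)\le \sup_K e^*(X,K)$, the key idea is to use $K=M_\epsilon$ itself. For each $\epsilon>0$, inserting the diagonal value $\epsilon'=\epsilon$ into the supremum in (i) gives
$$
e^*(X,M_\epsilon)\;\ge\;\limsup_{t\to\infty}\frac1t\log R^*(t,\epsilon,M_\epsilon)\;=\;\limsup_{t\to\infty}\frac1t\log R^*(t,\epsilon),
$$
whence $\limsup_{t\to\infty}\frac1t\log R^*(t,\epsilon)\le e^*(X,M_\epsilon)\le \sup_K e^*(X,K)$. Taking the supremum over $\epsilon>0$ and invoking monotonicity (ii) gives $e^*(X)\le\sup_K e^*(X,K)$, completing the proof.

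The step I expect to require the most care is disentangling the coupling between the covered set and the scale in the definition of $e^*(X)$: there the single parameter $\epsilon$ controls both the radius of the rescaled balls and, through $M_\epsilon$, the set being spanned, whereas in $e^*(X,K)$ these two roles are separated. The resolution—evaluating the decoupled quantity $e^*(X,M_\epsilon)$ at the diagonal value $\epsilon'=\epsilon$—is exactly what lets the fixed-compactum entropies recover the full rescaled entropy, and verifying that the relevant limits are genuine suprema (so that these diagonal comparisons survive the passage $\epsilon\to0$) is where the monotonicity observations (i) and (ii) must be applied cleanly.
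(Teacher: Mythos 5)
Your proof is correct and follows essentially the same route as the paper: one direction via $K\subset M_\epsilon$ for all small $\epsilon$, the other via the choice $K_\epsilon=M_\epsilon$ together with the monotonicity of $R^*(t,\epsilon,K)$ in $\epsilon$ (which lets one replace the limit $\epsilon\to0$ by a supremum). Your observation that $R^*(t,\epsilon)=R^*(t,\epsilon,M_\epsilon)$ holds as an identity, and your use of suprema in place of the paper's $\Delta$--$\epsilon_0$ bookkeeping, is a mild streamlining that also dispenses with the paper's separate treatment of the case $e^*(X)=\infty$, but the underlying argument is the same.
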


\begin{proof}
Fix a compact $K\subset M^*$. Then, there is $\epsilon_K>0$ such that $K\subset M_\epsilon$ for all $0<\epsilon<\epsilon_K$.
Suppose that $F$ is rescaled $(t,\epsilon)$-spanning for some $t>0$ and $0<\epsilon<\epsilon_K$.
Then, $F\subset M^*$ and $M_\epsilon\subset B^*(F,t,\epsilon)$ so
$F\subset M^*$ and $K\subset B^*(F,t,\epsilon)$ thus $F$ is rescaled $(t,\epsilon,K)$-spanning for all such $t$ and $\epsilon$.
It follows that $R^*(t,\epsilon,K)\leq R^*(t,\epsilon)$ for all such $t$ and $\epsilon$
hence
$$
e^*(X,K)=\lim_{\epsilon\to0}\limsup_{t\to\infty}\frac{1}t\log R^*(t,\epsilon,K)\leq \lim_{\epsilon\to0}\limsup_{t\to\infty}\frac{1}t\log R^*(t,\epsilon)=e^*(X)
$$
proving
$$
\sup_K e^*(X,K)\leq e^*(X).
$$
For the converse inequality we can assume $e^*(X)<\infty$ (otherwise the same argument will show that the supremum is infinity too).
Fix $\Delta>0$. Choose $\epsilon_0>0$ such that
$$
e^*(X)-\Delta<\limsup_{t\to\infty}\frac{1}t\log R^*(t,\epsilon),\quad\quad\forall 0<\epsilon<\epsilon_0.
$$
Given $0<\epsilon<\epsilon_0$ we have that $K_\epsilon=M_\epsilon$ is compact and contained in $M^*$. If $F$ is rescaled $(t,\epsilon,K_\epsilon)$-spanning for some $t>0$, then $F\subset M^*$ and $M_\epsilon\subset B^*(F,t,\epsilon)$ so $F$ is rescaled $(t,\epsilon)$-spanning too. This proves $R^*(t,\epsilon)\leq R^*(t,\epsilon,K_\epsilon)$. On the other hand, for fixed compact $K\subset M^*$ and $t>0$ the quantity $R^*(t,\epsilon,K)$ grows as $\epsilon\to0$ so can replace the limit as $\epsilon\to0$ by the supremum over $\epsilon>0$ in \eqref{bomba}.

So,
\begin{eqnarray*}
\limsup_{t\to\infty}\frac{1}t\log R^*(t,\epsilon)&\leq& \limsup_{t\to\infty}\frac{1}t\log R^*(t,\epsilon,K_\epsilon)\\
&\leq& \lim_{\gamma\to0}\limsup_{t\to\infty}\frac{1}t\log R^*(t,\gamma,K_\epsilon)\\
&=& e^*(X,K_\epsilon)\\
&\leq& \sup_K e^*(X,K).
\end{eqnarray*}
Letting $\epsilon\to0$ above we get
$$
e^*(X)\leq \sup_Ke^*(X,K)
$$
completing the proof.
\end{proof}

We use this lemma together with the arguments in Proposition 12 of Bowen \cite{b} and Lemma 2.5 in \cite{hw}  to prove the finiteness of the rescaled topological entropy.

\begin{lem}
\label{perseguido}
If $X$ is a $C^1$ vector field of a closed manifold $M$, then $e^*(X)<\infty$.
\end{lem}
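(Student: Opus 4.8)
The plan is to reduce, via Lemma \ref{fria}, to a uniform exponential bound on the rescaled spanning numbers $R^*(t,\epsilon,K)$ over compact sets $K\subset M^*$, and to obtain this bound by adapting Bowen's Lipschitz argument to the rescaled distance. Since $M$ is compact and $X$ is $C^1$, the field $X$ is Lipschitz; let $L=\sup_M\|DX\|$ (with respect to the Levi-Civita connection) and $n=\dim M$. The variational equation together with Gronwall's inequality gives $\|D\varphi_s(x)\|\le e^{Ls}$ for all $x\in M$ and $s\ge 0$, and the same estimate for the inverse flow yields $\|D\varphi_{-s}\|\le e^{Ls}$. Integrating along minimizing geodesics, the first bound gives the global estimate $d(\varphi_s(x),\varphi_s(y))\le e^{Ls}\,d(x,y)$. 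Moreover, from the flow identity $D\varphi_s(x)\,X(x)=X(\varphi_s(x))$ and the bound on $D\varphi_{-s}$ one gets $\|X(\varphi_s(x))\|\ge e^{-Ls}\|X(x)\|$ for every $x\in M^*$ and $s\ge 0$.

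First I would fix a compact $K\subset M^*$ and set $\delta_K=\min_{x\in K}\|X(x)\|>0$. Combining the two estimates, for $x\in K$, $y\in M$ and $0\le s\le t$ one has
\[
\frac{d(\varphi_s(x),\varphi_s(y))}{\|X(\varphi_s(x))\|}\;\le\;\frac{e^{Ls}\,d(x,y)}{e^{-Ls}\,\delta_K}\;\le\;\frac{e^{2Lt}}{\delta_K}\,d(x,y).
\]
Hence, writing $\eta=\epsilon\,\delta_K\,e^{-2Lt}$, any $y$ with $d(x,y)<\eta$ satisfies $d^*_t(x,y)<\epsilon$, so the ordinary ball $B(x,\eta)$ is contained in $B^*(x,t,\epsilon)$ whenever $x\in K$. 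Consequently, if $F\subset K$ is an $\eta$-net of $K$, then $K\subset\bigcup_{x\in F}B(x,\eta)\subset B^*(F,t,\epsilon)$, so $F$ is rescaled $(t,\epsilon,K)$-spanning. The minimal cardinality of such a net is bounded by the $\eta$-covering number of $M$, and since $M$ is a closed $n$-manifold there is a constant $C_M$ with $\mathrm{cov}_M(\eta)\le C_M\,\eta^{-n}$. This is the point where the arguments of Proposition 12 of \cite{b} and Lemma 2.5 of \cite{hw} enter.

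Therefore $R^*(t,\epsilon,K)\le C_M\,\eta^{-n}=C_M\,\delta_K^{-n}\,\epsilon^{-n}\,e^{2Lnt}$, and dividing by $t$ and letting $t\to\infty$ absorbs the constant together with the $\epsilon$ and $\delta_K$ factors, leaving
\[
\limsup_{t\to\infty}\frac{1}t\log R^*(t,\epsilon,K)\;\le\;2Ln.
\]
Letting $\epsilon\to0$ gives $e^*(X,K)\le 2Ln$, a bound independent of $K$, and Lemma \ref{fria} yields $e^*(X)=\sup_K e^*(X,K)\le 2Ln<\infty$. The one point requiring care—and the main obstacle—is the uniformity in $K$: as $K$ exhausts $M^*$ one has $\delta_K\to 0$, so the inclusion $B(x,\eta)\subset B^*(x,t,\epsilon)$ degenerates. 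This is harmless precisely because $\delta_K$ enters only through the multiplicative factor $\delta_K^{-n}$, which contributes nothing to the exponential growth rate after division by $t$; the rate $2Ln$ is governed solely by the derivative bound $L$ and the dimension $n$, exactly as in the finiteness proof for the topological entropy.
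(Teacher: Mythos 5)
Your proof is correct and follows essentially the same route as the paper: reduce to compact sets $K\subset M^*$ via Lemma \ref{fria}, use the Gronwall-type bounds $d(\varphi_s(x),\varphi_s(y))\le e^{Ls}d(x,y)$ and $\|X(\varphi_s(x))\|\ge e^{-Ls}\|X(x)\|$ (the paper cites Lemma 2.5 of \cite{hw} for the latter) to place ordinary balls of radius $\epsilon\,\delta_K e^{-2Lt}$ inside rescaled dynamical $(t,\epsilon)$-balls, and count roughly $e^{2dLt}$ of them to obtain the uniform bound $e^*(X,K)\le 2dL$. The only differences are cosmetic: where the paper constructs the covering by hand from charts $f_i$ and grids $E(\delta)$, you invoke the standard covering-number bound $C_M\,\eta^{-n}$ for a closed Riemannian manifold, which also spares you the paper's separate treatment of the case $L=0$.
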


\begin{proof}
It suffices to prove
\begin{equation}
\label{pi}
e^*(X)\leq 2d L
\end{equation}
for any Lipschitz constant $L$ of $X$, where $d$ is the dimension of $M$.

Indeed, fix a Lipschitz constant $L$.
By Lemma 2.5 in \cite{hw} one has
\begin{equation}
\label{cope}
e^{-Ls}\leq \frac{\|X(\varphi_s(z))\|}{\|X(z)\|}\leq e^{sL},\quad\quad\forall z\in M^*,\, s\geq0.
\end{equation}
If $L=0$, the right-hand side of \eqref{pi} is zero
while $\|X(\varphi_s(z))\|=\|X(z)\|$ for all $z\in M^*$ and $s\geq0$ by \eqref{cope}.
Then, for every compact $K\subset M^*$ and $\epsilon>0$, every rescaled $(0,\epsilon,K)$-spanning set is
rescaled $(t,\epsilon,K)$-spanning ($\forall t\geq0$) so $e^*(X,K)=0$
for all compact $K\subset M^*$. Thus, $e^*(X)=0$ by Lemma \ref{fria} and
\eqref{pi} holds.
Therefore, we can assume $L>0$.

Now, fix a compact subset $K\subset M^*$ and $\epsilon>0$.
Then, there are diffeomorphisms
$f_1,\cdots, f_n:B(0,2)\to M$ and a positive number $A$ such that
$$
K\subset \bigcup_{i=1}^nf_i(B(0,1))\subset \bigcup_{i=1}^nf_i(B(0,2))\subset M^*
\quad\mbox{ and }\quad
d(f_i(u),f_i(v))\leq A\|u-v\|,
$$
for all $u,v\in B(0,2)$ and $1\leq i\leq n$.
The third of the above inclusions implies that there is $\rho\in (0,\infty)$ such that
$$
\rho\leq \|X(x)\|,\quad\quad\forall x\in \bigcup_{i=1}^nf_i(B(0,2)).
$$

Now, for each $0<\delta\leq 2$ we let
$$
E(\delta)=\{(\delta l_1,\cdots,\delta l_d)\in \mathbb{R}^d\mid l_i\in \mathbb{Z},\, |l_i\delta|<2\}.
$$
Then, $card(E(\delta))\leq (\frac{5}\delta)^d$ and there is a constant $B>0$ (depending on the Euclidean metric of $\mathbb{R}^d$) such that for every $v\in B(0,1)$ there is $u\in E(\delta)$ satisfying $\|u-v\|\leq B\delta$. Replacing $B$ by $\frac{B}\rho$ we have
$\|u-v\|\leq B\rho\delta$ for all such $u,v$.

Since $L>0$, we can choose $T>0$ large such that
\begin{equation}
\label{pobre}
\frac{\epsilon}{e^{2Lt}AB}<2,\quad\quad\forall t\geq T.
\end{equation}
For all $t\geq T$ we define
$$
F=\bigcup_{i=1}^n f_i(E(\frac{\epsilon}{e^{2Lt}AB})).
$$
It follows that
\begin{eqnarray*}
card(F)&\leq & \sum_{i=1}^n card(f_i(E(\frac{\epsilon}{e^{2Lt}AB})))\\
&\overset{\eqref{pobre}}{\leq}& \left(\frac{5}{\left(\frac{\epsilon}{e^{2Lt}AB}\right)}\right)^dn\\
&=&
\left(\frac{5e^{2Lt}AB}{\epsilon}\right)^dn\\
&=& \left[\left(\frac{5AB}{\epsilon}\right)^dn\right]e^{2dLt},\quad\quad\forall t\geq T.
\end{eqnarray*}

On the other hand, given $y\in K$ one has $y=f_i(v)$ for some $v\in B(0,1)$ and $1\leq i\leq n$.
For this $v$ there is $u\in E(\frac{\epsilon}{e^{2Lt}AB})$ such that
$$
\|u-v\|\leq B\rho\frac{\epsilon}{e^{2Lt}AB}=\frac{\rho\epsilon}{e^{2Lt}A}.
$$
Thus, $x=f_i(u)$ belongs to $F$ and satisfies
$$
d(x,y)=d(f_i(u),f_i(v))\leq A\|u-v\|\leq A\rho\frac{\epsilon}{e^{2Lt}A}\leq \frac{\epsilon}{e^{2Lt}}\|X(x)\|.
$$
So, for all $t\geq T$ one has
$$
d(\varphi_s(x),\varphi_s(y))\leq e^{sL}d(x,y)\leq e^{tL}\frac{\epsilon}{e^{2Lt}}\|X(x)\|
\overset{\eqref{cope}}{\leq} \epsilon\|X(\varphi_s(x))\|,\quad\quad\forall 0\leq s\leq t.
$$
We conclude that $F$ is a rescaled $(t,\epsilon,K)$-spanning set for all $t\geq T$ so
$$
R^*(t,\epsilon,K)\leq card(F)\leq  \left[\left(\frac{5AB}{\epsilon}\right)^dn\right]e^{2dLt},\quad\quad\forall t\geq T.
$$
Then,
$$
\limsup_{t\to\infty}\frac{1}t\log R^*(t,\epsilon,K)\leq 2dL.
$$
Letting $\epsilon\to0$ we get $e^*(X,K)\leq 2dL$
and taking
the supremum over the compact subsets $K\subset M^*$ we get \eqref{pi}
from Lemma \ref{fria}. This completes the proof.
\end{proof}

Another equivalent definition is based on rescaled separating sets. More precisely, given a compact $K\subset M^*$, $t\geq0$ and $\epsilon>0$ we say that $E$ is a {\em rescaled $(t,\epsilon,K)$-separating set} if
$E\subset K$ and $B(x,t,\epsilon)\cap E=\{x\}$ for all $x\in E$.
Define
$$
S^*(t,\epsilon,K)=\max\{card(E)\mid E\mbox{ is a rescaled $(t,\epsilon,K)$-separating set}\},
$$
for all $t\geq0,\, \epsilon>0.$

\begin{lem}
\label{rasca}
For every $C^1$ vector field $X$ of a closed manifold $M$ one has
$$
e^*(X)=\sup_K\lim_{\epsilon\to0}\limsup_{t\to\infty}\frac{1}t\log S^*(t,\epsilon,K),
$$
where the supremum is over the compact subsets $K\subset M^*$.
\end{lem}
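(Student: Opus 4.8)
The plan is to prove the lemma by sandwiching the rescaled separating number $S^*(t,\epsilon,K)$ between two rescaled spanning numbers, in the spirit of the classical Bowen--Dinaburg comparison of spanning and separating sets. The one genuinely new feature is that $d^*_t$ is \emph{not symmetric} (its denominator depends on the first entry), so the usual symmetry of dynamical balls is unavailable; the replacement is the quasi-symmetry supplied by Lemma \ref{symmetry}, and the factor $4$ appearing below is exactly what is needed to absorb the $\tfrac{\epsilon}{4}\to\epsilon$ loss in that lemma. Throughout I fix $r_0$ as in Lemma \ref{symmetry}, a compact $K\subset M^*$, and work with $0<\epsilon<\tfrac{r_0}{4}$; I interpret the separating condition through the rescaled balls $B^*$, so that distinct $x,x'\in E$ satisfy $x'\notin B^*(x,t,\epsilon)$.

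First I would prove $S^*(t,\epsilon,K)\le R^*(t,\tfrac{\epsilon}{4},K)$. Let $E$ be any rescaled $(t,\epsilon,K)$-separating set and $F$ a minimal rescaled $(t,\tfrac{\epsilon}{4},K)$-spanning set. Since $E\subset K\subset B^*(F,t,\tfrac{\epsilon}{4})$, for each $x\in E$ I may pick $f(x)=w\in F\subset M^*$ with $x\in B^*(w,t,\tfrac{\epsilon}{4})$. If $f(x)=f(x')=w$ for $x\ne x'$, then $x,x'\in B^*(w,t,\tfrac{\epsilon}{4})$, and Lemma \ref{symmetry} applied with base point $w$ gives $B^*(w,t,\tfrac{\epsilon}{4})\subset B^*(x,t,\epsilon)$, so $x'\in B^*(x,t,\epsilon)\cap E$, contradicting that $E$ is separating. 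Hence $f$ is injective and $\mathrm{card}(E)\le\mathrm{card}(F)=R^*(t,\tfrac{\epsilon}{4},K)$; taking the maximum over $E$ gives the inequality (and incidentally shows $S^*(t,\epsilon,K)<\infty$, so a maximal separating set exists).

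Next I would prove $R^*(t,4\epsilon,K)\le S^*(t,\epsilon,K)$. Take a maximal rescaled $(t,\epsilon,K)$-separating set $E$. For $y\in K$: if $y\in E$ then trivially $y\in B^*(y,t,4\epsilon)$; if $y\notin E$, maximality forces a collision in $E\cup\{y\}$ involving $y$, of one of two forms. Either $y\in B^*(x,t,\epsilon)$ for some $x\in E$, and then $y\in B^*(E,t,\epsilon)\subset B^*(E,t,4\epsilon)$; or $x\in B^*(y,t,\epsilon)$ for some $x\in E$, in which case Lemma \ref{symmetry} applied with base point $y\in M^*$ and radius parameter $4\epsilon<r_0$ (so that $\tfrac{4\epsilon}{4}=\epsilon$) yields $B^*(y,t,\epsilon)\subset B^*(x,t,4\epsilon)$, whence $y\in B^*(x,t,4\epsilon)$. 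In every case $y\in B^*(E,t,4\epsilon)$, so the maximal $E$ is rescaled $(t,4\epsilon,K)$-spanning and $R^*(t,4\epsilon,K)\le\mathrm{card}(E)=S^*(t,\epsilon,K)$.

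Finally I would assemble the estimate $R^*(t,4\epsilon,K)\le S^*(t,\epsilon,K)\le R^*(t,\tfrac{\epsilon}{4},K)$, valid for $0<\epsilon<\tfrac{r_0}{4}$, apply $\tfrac1t\log(\cdot)$ and $\limsup_{t\to\infty}$, and then let $\epsilon\to0$. Since $R^*(t,\epsilon,K)$ is non-increasing in $\epsilon$, the function $\epsilon\mapsto\limsup_{t\to\infty}\tfrac1t\log R^*(t,\epsilon,K)$ is monotone and tends to $e^*(X,K)$ as $\epsilon\to0$ by definition \eqref{bomba}; both outer terms therefore converge to $e^*(X,K)$, and a squeeze gives
\[
\lim_{\epsilon\to0}\limsup_{t\to\infty}\tfrac1t\log S^*(t,\epsilon,K)=e^*(X,K).
\]
Taking the supremum over compact $K\subset M^*$ and invoking Lemma \ref{fria} completes the proof. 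I expect the main obstacle to be precisely the bookkeeping around the asymmetry of $d^*_t$: one must apply Lemma \ref{symmetry} with the correct base point and radius in each direction, and it is here that the scheme would silently break if one naively treated $B^*$ as symmetric.
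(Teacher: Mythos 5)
Your proposal is correct: every application of Lemma \ref{symmetry} is made with an admissible base point in $M^*$ and with the radius thresholds ($\epsilon<r_0$, respectively $4\epsilon<r_0$) respected, the maximality argument and the injection argument are sound, and the final squeeze plus Lemma \ref{fria} gives the claim. The core mechanism is the same as the paper's --- the same two combinatorial comparisons, with Lemma \ref{symmetry} and the factor $4$ doing exactly the work you describe --- but your scaffolding is genuinely different. The paper compares the local separating numbers $S^*(t,\epsilon,K)$ against the \emph{global} spanning numbers $R^*(t,\cdot)$ of Definition \ref{dino}: in one direction it uses that $K\subset M_{\epsilon/4}$ for $\epsilon$ below a threshold $\epsilon_K$, and in the other it runs the maximal-separating-set argument on the $\epsilon$-dependent compacts $K_\epsilon=M_\epsilon$, so it never needs Lemma \ref{fria} inside this proof. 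You instead keep both sides local to a single fixed compact $K$, proving the two-sided bound $R^*(t,4\epsilon,K)\le S^*(t,\epsilon,K)\le R^*(t,\tfrac{\epsilon}{4},K)$ for all $0<\epsilon<\tfrac{r_0}{4}$, which yields the stronger per-$K$ identity $\lim_{\epsilon\to0}\limsup_{t\to\infty}\tfrac1t\log S^*(t,\epsilon,K)=e^*(X,K)$ before passing to the supremum via Lemma \ref{fria}. What your route buys: no $\epsilon$-dependent compacts, no threshold $\epsilon_K$, and a cleaner local statement that is of independent use; what the paper's route buys: it works directly with the quantity $R^*(t,\epsilon)$ appearing in the definition of $e^*(X)$, so it is self-contained relative to Definition \ref{dino}. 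One small point you should make explicit rather than parenthetical: the finiteness of $R^*(t,\tfrac{\epsilon}{4},K)$ (hence of $S^*(t,\epsilon,K)$, hence the existence of a maximal separating set) follows from compactness of $K$ and the fact that $B^*(x,t,\epsilon)$ is an open neighborhood of each $x\in M^*$ --- this is the analogue of display \eqref{veneno} in the paper and your second inequality cannot start without it.
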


\begin{proof}
First we prove that for every compact $K\subset M^*$ there is
$\epsilon_K>0$ such that
$$
S^*(t,\epsilon,K)\leq R^*(t,\frac{\epsilon}4),\quad\quad\forall t>0,\, 0<\epsilon<\epsilon_K.
$$
Fix a compact $K\subset M^*$.
Then, there is $0<\epsilon_K<r_0$ such that
$$
K\subset M_{\frac{\epsilon}4},\quad\quad\forall 0<\epsilon <\epsilon_K.
$$
We can assume without loss of generality that $\epsilon_K<r_0$, where
$r_0$ is given by Lemma \ref{symmetry}.

Let $E$ and $F$ be a rescaled $(t,\epsilon,K)$-separating set and a rescaled $(t,\frac{\epsilon}4)$-spanning set respectively.
Then, $E\subset K\subset M_{\frac{\epsilon}4}$ and so there is a map $\phi:E\to F$ such that
$$
x\in B^*(\phi(x),t,\frac{\epsilon}4),\quad\quad\forall x\in E.
$$
Now, suppose $\phi(x)=\phi(x')$ for some $x,x'\in E$.
Then, the common value $z=\phi(x)=\phi(x')$ satisfies
$x,x'\in B^*(z,t,\frac{\epsilon}4)$. Since $\epsilon<r_0$,
$B^*(z,t,\frac{\epsilon}4)\subset B^*(x,t,\epsilon)$ by Lemma \ref{symmetry}
so $x'\in B^*(x,t,\epsilon)\cap E=\{x\}$ hence $x'=x$.
It follows that $\phi$ is injective thus $card(E)\leq card(F)$. Since $E$ and $F$ are arbitrary, $S^*(t,\epsilon,K)\leq R^*(t,\frac{\epsilon}4)$. It follows that
\begin{equation}
\label{veneno}
S^*(t,\epsilon,K)<\infty,\quad\forall t\geq0,\, 0<\epsilon< r_0\mbox{ and all compact }K\subset M^*.
\end{equation}
Moreover,
$$
\limsup_{\epsilon\to0}\limsup_{t\to\infty}\frac{1}t\log S^*(t,\epsilon,K)\leq \lim_{\epsilon\to0}\limsup_{t\to\infty}\frac{1}t\log R^*(t,\frac{\epsilon}4)=e^*(X).
$$
Since $K\subset M^*$ is arbitrary,
$$
\sup_K\limsup_{\epsilon\to0}\limsup_{t\to\infty}\frac{1}t\log S^*(t,\epsilon,K)\leq e^*(X).
$$

To prove the reversed inequality we first prove that there is $\epsilon_0>0$ such that
for every $0<\epsilon<\epsilon_0$ there is $K_\epsilon\subset M^*$ compact such that
\begin{equation}
\label{metro}
 R^*(t,\epsilon)\leq S^*(t,\frac{\epsilon}4,K_\epsilon),\quad\quad\forall t>0.
\end{equation}
Indeed, just take $\epsilon_0=r_0$ from Lemma \ref{symmetry} and $K_\epsilon=M_\epsilon$ for the given $0<\epsilon<\epsilon_0$.
Let $E$ be a $(t,\frac{\epsilon}4,K_\epsilon)$-separating set of maximal cardinality (which is finite by \eqref{veneno}). We shall prove that $E$ is rescaled $(t,\epsilon)$-spanning i.e.
\begin{equation}
\label{rumble}
y\in B^*(E,t,\epsilon),
\end{equation}
for all $y\in K_\epsilon$.

Take $y\in K_\epsilon$.
If $y\in E$ so \eqref{rumble} holds hence we can assume
$y\notin E$. It follows that $card(E\cup \{y\})=card(E)+1>card(E)$.
Since $E\cup \{y\}\subset K_\epsilon$,
we conclude that $E\cup \{y\}$ is not rescaled $(t,\frac{\epsilon}4,K_\epsilon)$-separating. Thus, there is $z\in E\cup \{y\}$ such that
$$
(B^*(z,t,\frac{\epsilon}4)\cap E)\cup (B^*(z,t,\frac{\epsilon}4)\cap \{y\})\neq \{z\}.
$$

If $z\in E$, then $B^*(z,t,\frac{\epsilon}4)\cap E=\{z\}$ so $B^*(z,t,\frac{\epsilon}4)\cap \{y\}\neq\emptyset$ thus
$y\in B^*(z,t,\frac{\epsilon}4)$ hence \eqref{rumble} holds.

If $z\notin E$, then $z=y$ so $B^*(z,t,\frac{\epsilon}4)\cap \{y\}=\{z\}$ thus
$B^*(y,t,\frac{\epsilon}4)\cap E\neq\emptyset$ yielding
$x\in B^*(y,t,\frac{\epsilon}4)$ for some $x\in E$.
Since $\epsilon<r_0$, $B^*(y,t,\frac{\epsilon}4)\subset B(x,t,\epsilon)$
Lemma \ref{symmetry} and since $y\in B^*(y,t,\frac{\epsilon}4)$ we get
$y\in B^*(x,t,\epsilon)$  thus \eqref{rumble} holds.
Then, $E$ is rescaling $(t,\epsilon)$-spanning proving \eqref{metro}.

So,
\begin{eqnarray*}
\limsup_{t\to\infty}\frac{1}t\log R^*(t,\epsilon)&\leq& \limsup_{t\to\infty}\frac{1}t\log S^*(t,\frac{\epsilon}4,K_\epsilon)\\
&\leq& \lim_{\gamma\to0}\limsup_{t\to\infty}\frac{1}t\log S^*(t,\gamma,K_\epsilon)\\
&\leq & \sup_K\lim_{\gamma\to0}\limsup_{t\to\infty}\frac{1}t\log S^*(t,\gamma,K)
\end{eqnarray*}
Letting $\epsilon\to0$ above we get
$$
e^*(X)\leq \sup_K\lim_{\gamma\to0}\limsup_{t\to\infty}\frac{1}t\log S^*(t,\gamma,K)
$$
completing the proof.
\end{proof}

\subsection{Characterizing positive rescaled topological entropy}
Let $X$ be a $C^1$ vector field of a closed manifold $M$. It can be proved that $e(X)>0$ if and only if there are sequences
$t_n\to\infty$ (as $n\to\infty$) and $E_n\subset X$ such that
\begin{equation}
\label{esforzo}
\limsup_{n\to\infty}\frac{1}{t_n}\log card(E_n)>0\quad\mbox{ and }
\quad
\inf_{n\in\mathbb{N}}\inf_{p\neq q\in E_n}d_{t_n}(p,q)>0.
\end{equation}

The result of this section is an analogous fact for the rescaled topological entropy:

\begin{prop}
\label{naro}
Let $X$ be a $C^1$ vector field of a closed manifold $M$.
Then, $e^*(X)>0$ if and only if
there are a compact $K\subset M^*$ and sequences $t_n\to\infty$, $E_n\subset K$ such that
\begin{equation}
\label{sambada}
\limsup_{n\to\infty}\frac{1}{t_n}\log card(E_n)>0\quad\mbox{ and }
\quad
\inf_{n\in\mathbb{N}}\inf_{p\neq q\in E_n}d^*_{t_n}(p,q)>0.
\end{equation}
\end{prop}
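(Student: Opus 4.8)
The plan is to read off the proposition directly from the separating-set formula of Lemma \ref{rasca}, which already expresses $e^*(X)=\sup_K\lim_{\epsilon\to0}\limsup_{t\to\infty}\frac1t\log S^*(t,\epsilon,K)$ with the supremum over compact $K\subset M^*$. The key observation is that a rescaled $(t,\epsilon,K)$-separating set is precisely a set $E\subset K$ with $d^*_t(p,q)\geq\epsilon$ for all distinct $p,q\in E$. Hence condition \eqref{sambada} is nothing but the statement that, for a \emph{single fixed} scale $\epsilon_0>0$, there are separating sets in some compact $K$ whose cardinalities grow exponentially along a sequence $t_n\to\infty$. The whole argument is then a translation between the ``$\lim_{\epsilon\to0}$'' formulation of Lemma \ref{rasca} and the ``fixed $\epsilon_0$'' formulation of \eqref{sambada}.

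For the forward implication I would start from $e^*(X)>0$ and use Lemma \ref{rasca} to fix a compact $K\subset M^*$ with $\lim_{\epsilon\to0}\limsup_{t\to\infty}\frac1t\log S^*(t,\epsilon,K)>0$. Since $S^*(t,\epsilon,K)$ increases as $\epsilon$ decreases, this limit equals $\sup_{\epsilon>0}\limsup_{t\to\infty}\frac1t\log S^*(t,\epsilon,K)$, so there is a fixed $\epsilon_0\in(0,r_0)$ with $\limsup_{t\to\infty}\frac1t\log S^*(t,\epsilon_0,K)=:a>0$; choosing $\epsilon_0<r_0$ guarantees finiteness of the relevant separating sets by \eqref{veneno}. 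Picking $t_n\to\infty$ realizing this $\limsup$ and letting $E_n\subset K$ be a maximal rescaled $(t_n,\epsilon_0,K)$-separating set, I would check that $\mathrm{card}(E_n)=S^*(t_n,\epsilon_0,K)$ gives $\limsup_n\frac1{t_n}\log\mathrm{card}(E_n)\geq a>0$, while the separating property yields $d^*_{t_n}(p,q)\geq\epsilon_0$ for all distinct $p,q\in E_n$, so $\inf_n\inf_{p\neq q\in E_n}d^*_{t_n}(p,q)\geq\epsilon_0>0$. This is exactly \eqref{sambada}.

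For the converse, given $K$, $t_n$ and $E_n$ satisfying \eqref{sambada}, I would set $\epsilon_0=\inf_n\inf_{p\neq q\in E_n}d^*_{t_n}(p,q)>0$. Then $d^*_{t_n}(p,q)\geq\epsilon_0$ for all distinct $p,q\in E_n$, so each $E_n$ is a rescaled $(t_n,\epsilon_0,K)$-separating set and therefore $S^*(t_n,\epsilon_0,K)\geq\mathrm{card}(E_n)$. Restricting the $\limsup$ over all $t\to\infty$ to the subsequence $t_n$, and then using monotonicity in $\epsilon$ to pass from the fixed scale $\epsilon_0$ to the limit $\epsilon\to0$, I would obtain $\lim_{\epsilon\to0}\limsup_{t\to\infty}\frac1t\log S^*(t,\epsilon,K)\geq\limsup_n\frac1{t_n}\log\mathrm{card}(E_n)>0$, whence $e^*(X)>0$ by Lemma \ref{rasca}.

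Once Lemma \ref{rasca} is in hand the argument is essentially bookkeeping, so I do not anticipate a serious obstacle. The only points needing care are the order of quantifiers---ensuring that \eqref{sambada} supplies one scale $\epsilon_0$ valid for \emph{all} $n$, which is what corresponds to a separating set at a fixed $\epsilon$ rather than a scale drifting with $n$---and the two routine monotonicity facts that $S^*(t,\epsilon,K)$ increases as $\epsilon\to0$ and that a $\limsup$ along a subsequence $t_n\to\infty$ bounds from below the $\limsup$ over all $t$.
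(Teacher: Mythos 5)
Your proposal is correct and follows essentially the same route as the paper's proof: both directions identify \eqref{sambada} with exponential growth of rescaled $(t_n,\epsilon_0,K)$-separating sets at a single fixed scale and then invoke Lemma \ref{rasca}, taking maximal separating sets $E_n$ for the forward implication and bounding $S^*(t_n,\epsilon_0,K)\geq \mathrm{card}(E_n)$ for the converse. The extra bookkeeping you supply (extracting $\epsilon_0$ via monotonicity, choosing $\epsilon_0<r_0$ so \eqref{veneno} gives finiteness) is left implicit in the paper but is the same argument.
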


\begin{proof}
By Lemma \ref{rasca} if $e^*(X)>0$  there are a compact $K\subset M^*$, $\epsilon>0$ and a sequence $t_n\to\infty$ such that
$$
\limsup_{n\to\infty}\frac{1}{t_n}\log S^*(t_n,\epsilon,K)>0.
$$
For all $n\in\mathbb{N}$ we let $E_n$ be a rescaled $(t_n,\epsilon,K)$-separating set with cardinality $S^*(t_n,\epsilon,K)$. Then, $E_n\subset K$ for all $n\in\mathbb{N}$ and
$$
\limsup_{n\to\infty}\frac{1}{t_n}\log card(E_n)>0.
$$
Moreover,
for all $n\in\mathbb{N}$ and all distinct $p,q\in E_n$ one has $d^*_{t_n}(p,q)\geq \epsilon$ yielding
$$
\inf_{n\in\mathbb{N}}\inf_{p\neq q\in E_n}d^*_{t_n}(p,q)\geq\epsilon>0.
$$
Therefore, \eqref{sambada} holds.

Conversely, suppose that there are $K$, $t_n$ and $E_n\subset K$ satisfying \eqref{sambada}.
The second inequality in \eqref{sambada} implies that there is $\epsilon>0$ such that $E_n$ is rescaled $(t_n,\epsilon,K)$-separating for every $n\in\mathbb{N}$.
So, $S^*(t_n,\epsilon,K)\geq card(E_n)$ for all $n\in\mathbb{N}$ thus Lemma \ref{rasca} implies
$$
e^*(X)\geq \limsup_{n\to\infty}\frac{1}{t_n}\log S^*(t_n,\epsilon,K)\geq \limsup_{n\to\infty}\frac{1}{t_n}\log card(E_n)>0
$$
completing the proof.
\end{proof}

A related remark is as follows.

\begin{remark}
\label{ba}
Since surface vector fields have zero toplogical entropy \cite{lg}, such vector fields do not exhibit sequences \( t_n \) and \( E_n \) satisfying \eqref{esforzo}.
However, an example of a vector field on the two-torus exhibiting a compact set \( K \), a sequence \( t_n \), and subsets \( E_n \subset K \) satisfying \eqref{sambada} will be constructed in the proof of the theorem. The singularities will play a fundamental role
in this construction.
\end{remark}

\section{Rescaled metric entropy}
\label{hop}

\noindent
In this section, we present the necessary tools to prove Item (1) of the theorem.
First, we will localize the rescaled metric entropy as follows.
Given $K\subset M$ and $t,\epsilon,\delta>0$ we
say that $F\subset M^*$ is a {\em rescaled $\mu$-$(t,\epsilon,\delta)$-spanning set of $K$} if $\mu(K\setminus B^*(F,t,\epsilon))<\delta.$
We need the following lemma for the forthcoming definition.

\begin{lem}
\label{paja}
Let $X$ a $C^1$ vector field $X$ of a closed manifold $M$ and $\mu$ be a Borel probability measure of $M$. If $\mu(Sing(X))=0$, then for all compact subset $K\subset M$ and $t,\epsilon,\delta>0$ there exists a finite rescaled $\mu$-$(t,\epsilon,\delta)$-spanning set of $K$.
\end{lem}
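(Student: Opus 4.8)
The plan is to use the hypothesis $\mu(Sing(X))=0$ to discard a piece of $K$ of arbitrarily small $\mu$-measure lying near the singular set, and then to cover the remaining compact piece, which sits inside $M^*$, by finitely many rescaled dynamical balls whose centers also lie in $M^*$. Since $B^*(x,t,\epsilon)$ is an open neighborhood of $x$ for every $x\in M^*$ (as already noted in the excerpt), a finite subcover will exist by compactness, and it will be the desired finite rescaled $\mu$-$(t,\epsilon,\delta)$-spanning set of $K$.

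First I would carry out the measure-theoretic reduction. Writing $M_\eta=\{x\in M:\|X(x)\|\ge\eta\}$, each $M_\eta$ is closed, hence compact, and $M_\eta\nearrow M^*$ as $\eta\searrow 0$, since $\bigcup_{\eta>0}M_\eta=\{x:\|X(x)\|>0\}=M^*$. Because $\mu(Sing(X))=0$ we have $\mu(K\cap M^*)=\mu(K)$, so continuity of $\mu$ from below gives $\mu(K\cap M_\eta)\to\mu(K)$ as $\eta\searrow0$. Hence I may fix $\eta>0$ with $\mu(K\setminus M_\eta)<\delta$, and set $K'=K\cap M_\eta$. Then $K'$ is compact (a closed subset of the compact set $K$), $K'\subset M^*$, and $\mu(K\setminus K')<\delta$.

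Next I would run the compactness argument on $K'$. For each $x\in K'\subset M^*$ the set $B^*(x,t,\epsilon)$ is an open neighborhood of $x$, so $\{B^*(x,t,\epsilon):x\in K'\}$ is an open cover of the compact set $K'$. Extract a finite subcover indexed by a finite set $F=\{x_1,\dots,x_m\}\subset K'\subset M^*$, so that $K'\subset B^*(F,t,\epsilon)$. Since $K'\subset B^*(F,t,\epsilon)$, any point of $K$ missed by $B^*(F,t,\epsilon)$ must lie outside $K'$; that is, $K\setminus B^*(F,t,\epsilon)\subset K\setminus K'$. Consequently
\[
\mu\bigl(K\setminus B^*(F,t,\epsilon)\bigr)\le\mu(K\setminus K')<\delta,
\]
so $F$ is a finite rescaled $\mu$-$(t,\epsilon,\delta)$-spanning set of $K$, as required.

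The argument is largely routine; the points requiring care are structural rather than computational. The essential use of the hypothesis $\mu(Sing(X))=0$ is to guarantee that $K'=K\cap M_\eta$ can be made to carry all but $\delta$ of the mass of $K$ while still being a \emph{compact} subset of $M^*$; without controlling the singular set one could not simultaneously keep the covering centers in $M^*$ (as the definition of a rescaled spanning set demands) and cover almost all of $K$. The only technical ingredients are the compactness of the level sets $M_\eta$ (continuity of $\|X\|$ on the compact $M$) and the openness of the rescaled balls with centers in $M^*$, both of which are already available in the excerpt.
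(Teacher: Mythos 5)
Your proof is correct and follows essentially the same route as the paper: discard a small-$\mu$-measure open neighborhood of $Sing(X)$, then cover the remaining compact subset of $M^*$ by finitely many rescaled balls $B^*(x,t,\epsilon)$ using their openness and compactness. The only cosmetic difference is that the paper obtains the neighborhood $U\supset Sing(X)$ with $\mu(U)<\delta$ directly (by regularity of the Borel measure) and covers all of $M\setminus U$, whereas you realize it explicitly as $U=\{x:\|X(x)\|<\eta\}=M\setminus M_\eta$ via continuity of $\mu$ from below and cover only $K\cap M_\eta$; both yield the same conclusion.
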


\begin{proof}
Fix $K\subset M$ compact and $t,\epsilon,\delta>0$.
Since $\mu(Sing(X))=0$, there is an open neighborhood $U$ of $Sing(X)$ such that $\mu(U)<\delta$.
Clearly $M\setminus U$ is compact and contained in $M^*$ so there is a finite set $F\subset M\setminus U\subset M^*$ such that
$$
M\setminus U\subset B^*(F,t,\epsilon)\quad\mbox{ and so }\quad
M\setminus B^*(F,t,\epsilon)\subset U.
$$
Then,
$$
\mu(K\setminus B^*(F,t,\epsilon))\leq \mu(M\setminus B^*(F,t,\epsilon))\leq \mu(U)<\delta
$$
and so $F$ is a rescaled $\mu$-$(t,\epsilon,\delta)$-spanning set of $K$.
\end{proof}

It follows from this lemma that if $\mu(Sing(X))=0$, then
\begin{multline*}
R^{\mu*}(t,\epsilon,\delta,K)=\min\{card(F):\\
F\mbox{ is a rescaled $\mu$-$(t,\epsilon,\delta)$-spanning set of }K\}<\infty,
\end{multline*}
for all compact subset $K\subset M$ and $t,\epsilon,\delta>0$.
Then, we introduce the following auxiliary definition.

\begin{definition}
For every $C^1$ vector field $X$ of a closed manifold $M$, $K\subset M$ compact and every Borel probability measure $\mu$ of $M$ with $\mu(Sing(X))=0$ we define
$$
e^*_\mu(X,K)=\lim_{\delta\to0}\lim_{\epsilon\to0}\limsup_{t\to\infty}\frac{1}t\log R^{\mu*}(t,\epsilon,\delta,K).
$$
\end{definition}

Since $R^{\mu*}(t,\epsilon,\delta,K)$ decreases as $\epsilon\to0$ (resp. $\delta\to0$), one has
\begin{equation}
\label{gatuso}
e^*_\mu(X,K)=\sup_{\delta>0}\sup_{\epsilon>0}\limsup_{t\to\infty}\frac{1}t\log R^{\mu*}(t,\epsilon,\delta,K).
\end{equation}

We estimate this entropy by noting that $R^{\mu*}(t,\epsilon,\delta,M)$ is just $N^*_\mu(t,\epsilon,\delta)$ for all $t,\epsilon>0$ and $0<\delta<1$.
From this we obtain the following lemma.

\begin{lem}
\label{babel}
Let $X$ a $C^1$ vector field $X$ of a closed manifold $M$ and $\mu$ be a Borel probability measure of $M$. If $\mu(Sing(X))=0$, then
$e^*_\mu(X)=e^*_\mu(X,M)$.
\end{lem}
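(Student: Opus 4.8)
The plan is to reduce the equality $e^*_\mu(X)=e^*_\mu(X,M)$ to the single pointwise identity
$$
R^{\mu*}(t,\epsilon,\delta,M)=N^*_\mu(t,\epsilon,\delta),\qquad \forall\, t,\epsilon>0,\ 0<\delta<1,
$$
which is exactly the observation made in the paragraph preceding the statement. Once this identity is verified, the lemma is immediate: the definition of $e^*_\mu(X,M)$ applies the operations $\lim_{\delta\to0}\lim_{\epsilon\to0}\limsup_{t\to\infty}\frac{1}{t}\log(\cdot)$ to the array $R^{\mu*}(t,\epsilon,\delta,M)$, while the definition of $e^*_\mu(X)$ applies the very same operations to $N^*_\mu(t,\epsilon,\delta)$; since the two arrays coincide entrywise, so do the resulting numbers. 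Finiteness of all quantities involved is furnished by Lemma \ref{paja}, so every logarithm is well defined.

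Thus the only real content is the identity, which I would establish by two inequalities obtained purely by unwinding the definitions. First I would note that a rescaled dynamical $(t,\epsilon)$-ball is by construction a set $B^*(x,t,\epsilon)$ with center $x\in M^*$, so any finite family of such balls has its set of centers contained in $M^*$, matching the requirement $F\subset M^*$ in the definition of a rescaled $\mu$-$(t,\epsilon,\delta)$-spanning set. For the inequality $R^{\mu*}(t,\epsilon,\delta,M)\leq N^*_\mu(t,\epsilon,\delta)$, I would take an optimal family $B^*(x_1,t,\epsilon),\dots,B^*(x_n,t,\epsilon)$ whose union covers a set of $\mu$-measure exceeding $1-\delta$; then $F=\{x_1,\dots,x_n\}\subset M^*$ satisfies $\mu(B^*(F,t,\epsilon))>1-\delta$, i.e. $\mu(M\setminus B^*(F,t,\epsilon))<\delta$, so $F$ is a rescaled $\mu$-$(t,\epsilon,\delta)$-spanning set of $M$ with $\mathrm{card}(F)\leq n$. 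For the reverse inequality, I would take an optimal spanning set $F=\{x_1,\dots,x_m\}$ and form the $m$ balls $B^*(x_i,t,\epsilon)$; their union is $B^*(F,t,\epsilon)$, which has measure $>1-\delta$ and hence covers a subset of measure exceeding $1-\delta$ (namely itself), giving $N^*_\mu(t,\epsilon,\delta)\leq m$.

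I do not expect any genuine obstacle here, as the argument is definition-chasing rather than analysis. The one point that deserves a sentence of care is the equivalence between ``a finite family of balls covers some subset of $\mu$-measure greater than $1-\delta$'' and ``the union of the balls has $\mu$-measure greater than $1-\delta$''; this follows from monotonicity of $\mu$ together with the freedom to take the covered subset to be the union of the balls itself, and it is precisely what translates the coverage formulation defining $N^*_\mu$ into the complementary small-measure formulation $\mu(M\setminus B^*(F,t,\epsilon))<\delta$ defining $R^{\mu*}(\cdot,M)$.
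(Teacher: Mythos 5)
Your proposal is correct and is essentially the paper's own argument: the paper proves the lemma by the single observation, stated in the paragraph before it, that $R^{\mu*}(t,\epsilon,\delta,M)=N^*_\mu(t,\epsilon,\delta)$, after which the equality of the two entropies follows from the identical limiting procedures in their definitions. You have merely written out the two definition-chasing inequalities (and the measurability/monotonicity point about the union of open balls) that the paper leaves implicit, which is a faithful filling-in rather than a different route.
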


An advantage of the "local" entropy $e^*_\mu(X,K)$ is given by the following lemma.

\begin{lem}
\label{short}
For every $C^1$ vector field $X$ of a closed manifold $M$ and every compact subset $K\subset M^*$ one has
$$
\sup_\mu e^*_\mu(X,K)\leq e^*(X,K),
$$
where the supremum is over the Borel probability measures $\mu$ with $\mu(Sing(X))=0$.
\end{lem}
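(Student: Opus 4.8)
The plan is to reduce the whole statement to a single, $\mu$-independent comparison of the two counting functions, namely
$$
R^{\mu*}(t,\epsilon,\delta,K)\;\le\;R^*(t,\epsilon,K)
$$
for every Borel probability measure $\mu$ with $\mu(Sing(X))=0$ and all $t,\epsilon,\delta>0$. First I would observe that any rescaled $(t,\epsilon,K)$-spanning set $F$ is automatically a rescaled $\mu$-$(t,\epsilon,\delta)$-spanning set of $K$: by definition such an $F$ satisfies $F\subset M^*$ and $K\subset B^*(F,t,\epsilon)$, whence $K\setminus B^*(F,t,\epsilon)=\emptyset$ and therefore $\mu(K\setminus B^*(F,t,\epsilon))=0<\delta$. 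Minimizing cardinalities over all such $F$ yields the displayed inequality, and the key point is that its right-hand side depends neither on $\mu$ nor on $\delta$.

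Next I would push this inequality through the $\limsup$ and the suprema. Taking $\tfrac1t\log(\cdot)$ and $\limsup_{t\to\infty}$ preserves it, so
$$
\limsup_{t\to\infty}\frac{1}{t}\log R^{\mu*}(t,\epsilon,\delta,K)\;\le\;\limsup_{t\to\infty}\frac{1}{t}\log R^*(t,\epsilon,K)
$$
for all $\epsilon,\delta>0$. Since the right-hand side is independent of $\delta$, the supremum over $\delta>0$ leaves it unchanged; taking then the supremum over $\epsilon>0$ and invoking the representation \eqref{gatuso} gives
$$
e^*_\mu(X,K)\;\le\;\sup_{\epsilon>0}\,\limsup_{t\to\infty}\frac{1}{t}\log R^*(t,\epsilon,K).
$$
To finish I would identify the right-hand side with $e^*(X,K)$: because $R^*(t,\epsilon,K)$ grows as $\epsilon\to0$ (the monotonicity already used in the proof of Lemma \ref{fria}), the supremum over $\epsilon>0$ coincides with the limit as $\epsilon\to0$, and so equals $e^*(X,K)$ by \eqref{bomba}. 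This establishes $e^*_\mu(X,K)\le e^*(X,K)$ for every admissible $\mu$, and since the bound is uniform in $\mu$, taking the supremum over all such $\mu$ completes the proof.

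I do not expect a serious obstacle here. The entire content is the opening observation that an \emph{exact} (everywhere-covering) spanning set is in particular a $\mu$-spanning set for \emph{every} measure $\mu$ and \emph{every} tolerance $\delta$; once that is in place, the remainder is pure monotonicity bookkeeping. The only point requiring a little care is to take the suprema and limits in an order consistent with \eqref{gatuso} and with the monotonicity of $R^*(t,\epsilon,K)$ in $\epsilon$, so that the dependence on $\delta$ and on $\mu$ drops out cleanly before the final supremum over $\mu$ is taken.
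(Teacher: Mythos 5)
Your proposal is correct and follows essentially the same route as the paper's own proof: the core observation in both is that a rescaled $(t,\epsilon,K)$-spanning set is automatically a rescaled $\mu$-$(t,\epsilon,\delta)$-spanning set of $K$ (since $\mu(K\setminus B^*(F,t,\epsilon))=0<\delta$), giving $R^{\mu*}(t,\epsilon,\delta,K)\leq R^*(t,\epsilon,K)$, after which one passes to the entropies. Your version merely spells out the limit/supremum bookkeeping (via \eqref{gatuso} and the monotonicity of $R^*(t,\epsilon,K)$ in $\epsilon$) that the paper compresses into ``taking $\log$, dividing by $t$ and letting $\epsilon,\delta\to0$.''
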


\begin{proof}
Fix a compact $K\subset M^*$ and a Borel probability measure $\mu$ of $M$ such that $\mu(Sing(X))=0$.
If $F$ is a rescaled $(t,\epsilon,K)$-spanning set, then
$K\subset B^*(F,t,\epsilon)$ and so $\mu(K\setminus B^*(F,t,\epsilon))=0<\delta$
for all $\delta>0$.
Thus, $F$ is a $\mu$-$(t,\epsilon,\delta)$-spanning set of $K$
proving
$$
R^{\mu*}(t,\epsilon,\delta,K)\leq R^*(t,\epsilon,K),\quad\quad\forall t,\epsilon,\delta>0.
$$
Then, by taking $\log$, dividing by $t$ and letting $\epsilon,\delta\to0$ we get
$e^*_\mu(X,K)\leq e^*(X,K)$ and we are done.
\end{proof}

Another advantage is that the rescaled metric entropy $e^*_\mu(X)$ splits into the local entropies $e^*_\mu(X,K)$ likewise $e^*(X)$ splits into the local entropies $e^*(X,K)$ according to Lemma \ref{fria}. More precisely, we have the following lemma.

\begin{lem}
\label{singar}
Let $X$ a $C^1$ vector field $X$ of a closed manifold $M$ and $\mu$ be a Borel probability measure of $M$. If $\mu(Sing(X))=0$, then
$$
e^*_\mu(X)=\sup_Ke^*_\mu(X,K),
$$
where the supremum is over the compact subsets $K\subset M^*$.
\end{lem}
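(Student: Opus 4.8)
The plan is to prove the two inequalities separately, mirroring the proof of Lemma \ref{fria}. The trivial direction is $\sup_K e^*_\mu(X,K)\leq e^*_\mu(X)$, which follows from monotonicity of the spanning number in the target set. The substantial direction is $e^*_\mu(X)\leq \sup_K e^*_\mu(X,K)$, where the difficulty is that $e^*_\mu(X)=e^*_\mu(X,M)$ by Lemma \ref{babel} forces us to span all of $M$, including a neighborhood of $Sing(X)$, whereas the local entropies on the right only see compact sets inside $M^*$. Bridging this gap is exactly where the hypothesis $\mu(Sing(X))=0$ enters.

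For the easy inequality, I would fix a compact $K\subset M^*$ and parameters $t,\epsilon,\delta>0$. If $F$ is a rescaled $\mu$-$(t,\epsilon,\delta)$-spanning set of $M$, then from $K\setminus B^*(F,t,\epsilon)\subset M\setminus B^*(F,t,\epsilon)$ I get $\mu(K\setminus B^*(F,t,\epsilon))\leq \mu(M\setminus B^*(F,t,\epsilon))<\delta$, so the same $F$ spans $K$. Hence $R^{\mu*}(t,\epsilon,\delta,K)\leq R^{\mu*}(t,\epsilon,\delta,M)$ for all parameters, and passing to the definition (via the sup form \eqref{gatuso}) gives $e^*_\mu(X,K)\leq e^*_\mu(X,M)=e^*_\mu(X)$. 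Taking the supremum over $K$ yields this half.

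For the hard inequality I would exhaust $M^*$ by the compact sets $M_{\delta'}=\{x:\|X(x)\|\geq\delta'\}$. Since $X$ is $C^1$, each $M_{\delta'}$ is a compact subset of $M^*$, and as $\delta'\downarrow 0$ they increase to $M^*$; continuity of $\mu$ together with $\mu(Sing(X))=0$ gives $\mu(M_{\delta'})\to\mu(M^*)=1$, so $\mu(M\setminus M_{\delta'})\to 0$. Now fix $\delta,\epsilon>0$, choose $\delta'$ with $\mu(M\setminus M_{\delta'})<\delta/2$, and set $K=M_{\delta'}$. The key bookkeeping step is that any rescaled $\mu$-$(t,\epsilon,\delta/2)$-spanning set $F$ of $K$ is automatically a rescaled $\mu$-$(t,\epsilon,\delta)$-spanning set of $M$, because
\[
\mu(M\setminus B^*(F,t,\epsilon))\leq \mu(M\setminus K)+\mu(K\setminus B^*(F,t,\epsilon))<\tfrac{\delta}{2}+\tfrac{\delta}{2}=\delta.
\]
This yields $R^{\mu*}(t,\epsilon,\delta,M)\leq R^{\mu*}(t,\epsilon,\delta/2,K)$ for every $t$, whence
\[
\limsup_{t\to\infty}\frac{1}{t}\log R^{\mu*}(t,\epsilon,\delta,M)\leq \limsup_{t\to\infty}\frac{1}{t}\log R^{\mu*}(t,\epsilon,\delta/2,K)\leq e^*_\mu(X,K)\leq \sup_{K'}e^*_\mu(X,K').
\]

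Since $\delta,\epsilon>0$ were arbitrary, taking the supremum over them in \eqref{gatuso} applied to $K=M$ gives $e^*_\mu(X)=e^*_\mu(X,M)\leq\sup_{K'}e^*_\mu(X,K')$, which combined with the easy half completes the proof. I expect the main obstacle to be precisely the uniform halving of the error budget $\delta$: one must select the compact set $K=M_{\delta'}$ using only $\delta$ (independently of $t$ and $\epsilon$), so that the measure lost near $Sing(X)$ is absorbed into the budget $\delta/2$ while leaving the remaining $\delta/2$ for the genuine spanning error on $K$; this is what lets the estimate survive the iterated limits $\limsup_t$, then $\epsilon\to 0$, then $\delta\to 0$.
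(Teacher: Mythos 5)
Your proof is correct and follows essentially the same route as the paper's: reduce to $e^*_\mu(X,M)$ via Lemma \ref{babel}, get the easy inequality by monotonicity, and for the hard one remove a small-measure open neighborhood of $Sing(X)$ so that a $\mu$-$(t,\epsilon,\delta/2)$-spanning set of the compact remainder becomes a $\mu$-$(t,\epsilon,\delta)$-spanning set of $M$. Your compact set $K=M_{\delta'}$ is just a concrete instance of the paper's $K'=M\setminus U$ (since $M\setminus M_{\delta'}$ is an open neighborhood of $Sing(X)$ of small measure), and replacing the paper's $\Delta$-argument by taking suprema in \eqref{gatuso} is only a cosmetic streamlining.
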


\begin{proof}
By Lemma \ref{babel} it suffices to show
$$
e^*_\mu(X,M)=\sup_Ke^*_\mu(X,K)
$$
where the supremum is over the compact $K\subset M^*$.

Clearly $K\subset K'$ implies $R^{\mu*}(t,\epsilon,\delta,K)\leq
R^{\mu*}(t,\epsilon,\delta,K')$. Since $M$ itself is compact, we obtain
$e^*_\mu(X,K)\leq e^*_\mu(X,M)$ so
$$
\sup_Ke^*_\mu(X,K)\leq e^*_\mu(X,M)
$$
where the supremum is over the compact subsets $K\subset M^*$.

To prove the reversed inequality we can assume $e^*_\mu(X,M)<\infty$
(otherwise a similar argument shows that the supremum is $\infty$ too).
Fix $\Delta>0$. Then, there are $\epsilon,\delta>0$ such that
\begin{equation}
\label{inmortal}
e^*_\mu(X,M)-\Delta<\limsup_{t\to\infty}\frac{1}t\log R^{\mu*}(t,\epsilon,\delta,M).
\end{equation}
Since $\mu(Sing(X))=0$, there exists an open neighborhood $U$ of $Sing(X)$ such that
$$
\mu(U)<\frac{\delta}2.
$$
Take $K'=M\setminus U$ hence $K'$ is compact and $K'\subset M^*$.

Let $F$ be any rescaled $\mu$-$(t,\epsilon,\frac{\delta}2)$-spanning set of $K'$.
Then,
$
\mu(K'\setminus B^*(F,t,\epsilon))<\frac{\delta}2
$
and so
\begin{eqnarray*}
\mu(M\setminus B^*(F,t,\epsilon))&\leq& \mu(M\setminus K')+\mu(K'\setminus B^*(F,t,\epsilon))\\
&=&\mu(U)+\mu(K'\setminus B^*(F,t,\epsilon))\\
&<&\frac{\delta}2+\frac{\delta}2\\
&=& \delta.
\end{eqnarray*}
It follows that $F$ is a rescaled $\mu$-$(t,\epsilon,\delta)$-spanning set of $M$.
Therefore,
$$
R^{\mu*}(t,\epsilon,\delta,M)\leq R^{\mu*}(t,\epsilon,\frac{\delta}2,K'),\quad\quad\forall t>0.
$$
Then, by taking $\log$, dividing by $t$ and letting $t\to\infty$ using \eqref{inmortal} we obtain
$$
e^*_\mu(X,M)-\Delta\leq \limsup_{t\to\infty}\frac{1}t\log R^{\mu*}(t,\epsilon,\frac{\delta}2,K')\overset{\eqref{gatuso}}{\leq} e^*_\mu(X,K').
$$
Since $K'\subset M^*$ is compact, we get
$$
e^*_\mu(X,M)-\Delta<\sup_K e^*_\mu(X,K)
$$
where the supremum is over the compact $K\subset M\setminus  Sing(X)$. Letting $\Delta\to0$ we get
$$
e^*_\mu(X,M)\leq\sup_K e^*_\mu(X,K)
$$
completing the proof.
\end{proof}

\section{Proof of the theorem}
\label{hup}

\noindent
To any $C^1$ vector field $X$ of a closed manifold $M$
we can assign the nonnegative value $e^*(X)$ from Definition \ref{dino}.
By Lemma \ref{perseguido} we have $e^*(X)\in [0,\infty)$.
We shall prove that this number satisfies the required properties.

First we show Item (1).
The proof follows from the sequence of equalities and inequalities below where the suprema are over the compact subsets $K\subset M^*$ and the Borel probability measures $\mu$ of $M$ with $\mu(Sing(X))=0$ respectively:
\begin{eqnarray*}
\sup_\mu e^*_\mu(X)&=&\sup_\mu\sup_K e^*_\mu(X,K)\quad\quad\mbox{(by Lemma \ref{short})}\\
&=&\sup_K\sup_\mu e^*_\mu(X,K)\\
&\leq& \sup_K e^*(X,K)\quad\quad\quad\quad\mbox{(by Lemma \ref{singar})}\\
&=&e^*(X)\quad\quad\quad\quad\quad\quad\quad\mbox{(by Lemma \ref{fria}).}
\end{eqnarray*}

Afterwards, we prove Item (2).
Recall that a Borel probability measure $\mu$ of $M$ is
{\em invariant} for $X$ if $\mu\circ\varphi_t=\mu$ for all $t\in\mathbb{R}$.
We say that $\mu$ is {\em ergodic} for $X$ if $\mu(I)\in\{0,1\}$ for all measurable subset $I$ which is invariant (i.e. $\varphi_t(I)=I$ for all $t\in \mathbb{R}$).

By Theorem A in \cite{s} we have
\begin{equation}
\label{gigante}
e(X)=\sup\{e_\mu(X):\mu\mbox{ is ergodic invariant for } X\},
\end{equation}
where
$$
e_\mu(X)=\lim_{\epsilon\to0}\limsup_{t\to\infty}\frac{1}t\log R^\mu(t,\epsilon,\delta),\quad\quad(\forall 0<\delta<1)
$$
and $R^\mu(t,\epsilon,\delta)$ is the minimal number of dynamical $(t,\epsilon)$-balls \eqref{kubrick} needed to cover a set of $\mu$-measure greater that $1-\delta$.

We can improve \eqref{gigante} as follows:
If $\mu$ is ergodic invariant with $\mu(Sing(X))>0$, then
$\mu$ is the Dirac measure supported on a singularity. Then, $e_\mu(X)=0$ for all such measures thus \eqref{gigante} reduces to
\begin{multline}
\label{yoyo}
e(X)=\sup\big\{e_\mu(X):\mu\mbox{ is ergodic invariant for }X
\mbox{ with }\mu(Sing(X))=0\big\}.
\end{multline}
Next, we have
$$
e_\mu(X)\leq e^*_\mu(X)
$$
for every ergodic invariant measure $\mu$.
Indeed, define
\begin{equation}
\label{pqp}
\|X\|_\infty=\sup_{x\in M}\|X(x)\|.
\end{equation}
Since $M$ is compact and $X$ is nonzero, $\|X\|_\infty\in (0,\infty)$.
Given $0<\delta<1$ and $t,\epsilon>0$ if a collection of rescaled dynamical $(t,\epsilon)$-balls
$$
\{y\in M:d(\varphi_s(x),\varphi_s(y))<\epsilon\|X(\varphi_s(x))\|,\,\forall 0\leq s\leq t\}
$$
covers a subset of measure greater than $1-\delta$, then the associated collection of
dynamical $(t,\epsilon\|X\|_\infty)$-balls
$$
\{y\in M:d(\varphi_s(x),\varphi_s(y))<\epsilon\|X\|_\infty,\,\forall 0\leq s\leq t\}
$$
also covers a subset of measure greater than $1-\delta$.
Since both collections have the same cardinality, we obtain
$R^\mu(t,\epsilon\|X\|_\infty,\delta)\leq R^{\mu*}(t,\epsilon,\delta)$ for all $0<\delta<1$ and $t,\epsilon>0$ proving the assertion.

Therefore,
\begin{eqnarray*}
e(X)&\overset{\eqref{yoyo}}{=}&\sup\{e_\mu(X):\mu\mbox{ is ergodic invariant for }X\mbox{ with }\mu(Sing(X))=0\}\\
&\leq& \sup\{e^*_\mu(X):\mu(Sing(X))=0\}\\
&\leq&e^*(X)\quad\quad\quad\quad\mbox{ (by Item (1))}.
\end{eqnarray*}
This proves Item (2).

Next, we prove Item (3).
Suppose that $X$ is nonsingular.
Then,
$$
m(X)=\inf_{x\in M}\|X(x)\|\in (0,\infty).
$$
It follows that
$B(F,t,\epsilon)\subset B^*(F,t,\frac{\epsilon}{m(X)})$ for all $t\geq0$, $\epsilon>0$ and $F\subset M^*$.
Hence, every $(t,\epsilon)$-spanning set is rescaled $(t,\frac{\epsilon}{m(X)},\delta)$-spanning for all $t\geq0$ and $\delta,\epsilon>0$ thus
$R^*(t,\epsilon,\delta)\leq R(t,m(X)\epsilon)$ for all such $t,\delta,\epsilon$ yielding
$e^*(X)\leq e(X)$ and so $e^*(X)=e(X)$ by Item (2).
This proves Item (3).

In the sequel, we prove Item (4).
Take a periodic flow $\psi$ on $T^2$ with constant velocity $1$. We see $T^2$ as a square $S$ with sides of length $4$ in $\mathbb{R}^2$ with vertices at the points $(-2,0),(2,0),(-2,4)$ and $(2,4)$ and identifying first the lower and upper sides and then identifying the right and left sides (see Figure \ref{fig1}). Let us consider on $S$ the vector field $X^0$ with constant and equal to $(1,0)$ velocity. Thus $X^0$ generates the desired $\psi$.

Next, consider a $C^\infty$ function $\rho:[-2,2]\to \mathbb{R}$ satisfying the following conditions:
\begin{enumerate}
\item
$\rho=1$ in $[-2,-1]\cup[1,2]$;
\item
$\rho=-x$ in $[-\frac{1}4,\frac{1}4]$;
\item
$\rho=x-\frac{2}3$ in $[\frac{7}{12},\frac{9}{12}]$.
\end{enumerate}
Define the $C^\infty$ vector field $X$ of $T^2$ by $X(p)=\rho(x) X^0(p)$ for all $p=(x,y)\in T^2$. The portrait face of $X$ is depicted in Figure \ref{fig1}.
As usual $\varphi$ denotes the flow generated by $X$.
Note that $Sing(X)=\{0,\frac{2}3\}\times [0,4]$.

\begin{figure}
  \includegraphics[width=150pt]{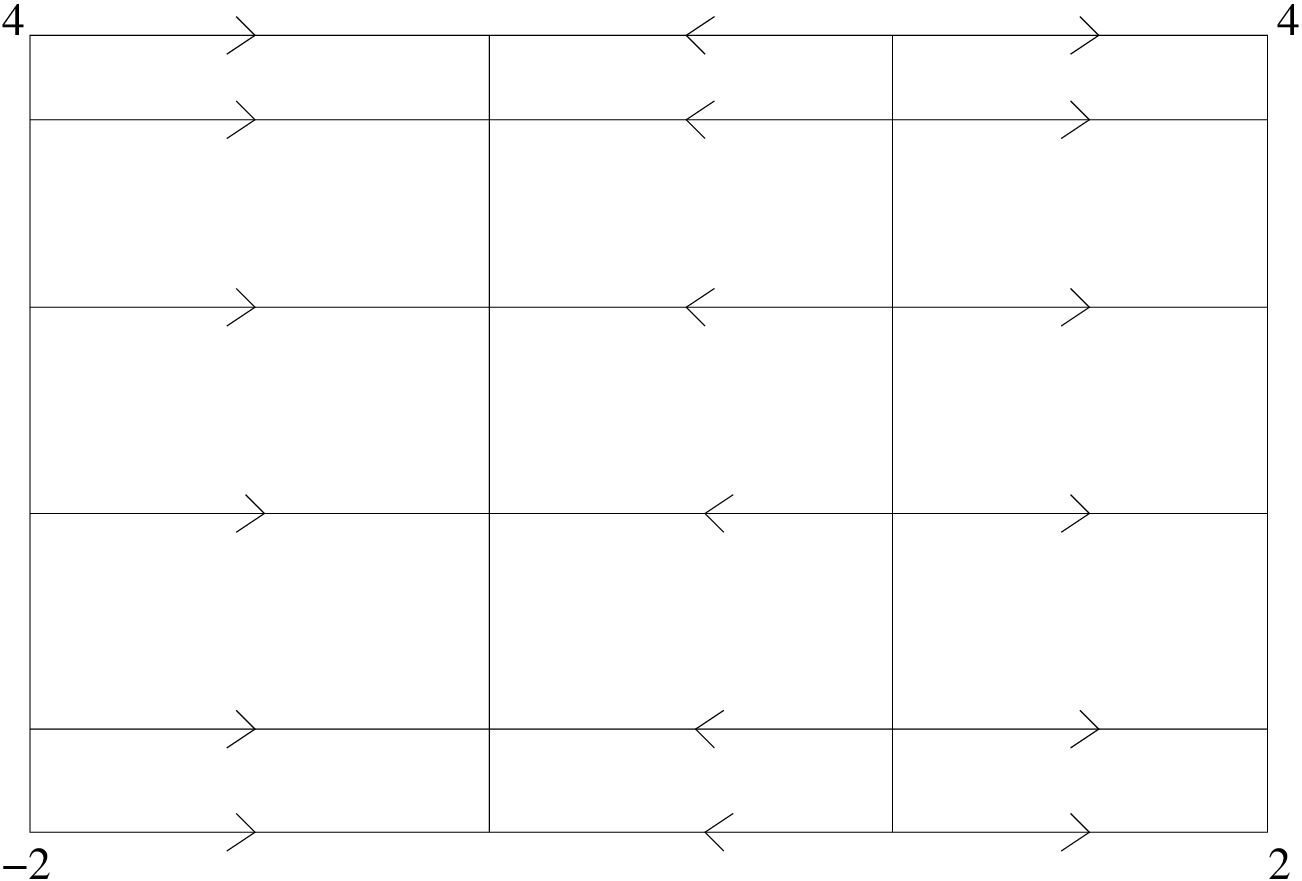}
 \caption{Portrait face of $X$}
  \label{fig1}
\end{figure}
Now, consider the circle $C$ in $T^2$ represented by $\{-2\}\times [0,4]$.
Then, $C$ is compact and $C\subset M\setminus Sing(X)$.
Moreover,
by the choice of $\rho$ we have $\|X(\varphi_n(p))\|=e^{-n}$ whenever $p\in C$.
For each $n\in\mathbb{N}$ we divide $C$ into $2^n$ segments of equal length. The dividing points will be collected together in a subset $E_n\subset C$ with $card(E_n)=2^n$. It follows from the construction of $X$ that $d(\varphi_n(p),\varphi_n(q))\geq L2^{-n}$ for every $n\in\mathbb{N}$ and all distinct points $p,q\in E_n$, where $L$ denotes the length of $C$.
Then, the sequence $t_n=n$ satisfies
$$
\inf_{n\in\mathbb{N}}\inf_{p\neq q\in E_n}d^*_n(p,q)
\geq \inf_{n\in\mathbb{N}}L\left(\frac{e}2\right)^n=\frac{Le}2>0.
$$
Since
$$
\limsup_{n\to\infty}\frac{1}n\log card(E_n)=\log 2>0,
$$
one has $e^*(X)>0$ by Proposition \ref{naro}.

Next, we prove Item (5).
Suppose that $X$ is rescaled topological conjugated to another vector field $Y$ (with flow $\varphi$) of a closed manifold $N$.
More precisely,
there is a rescaled homeomorphism $h:N\to M$ such that $\phi_t\circ h=h\circ \varphi_t$ for all $t\in\mathbb{R}$.

In particular, $h(N^*)=M^*$.
Fix $\epsilon>0$.
Then, there is $\delta>0$ such that
$$
y,y'\in N\quad\mbox{ and }\quad d(y,y')\leq \delta\|Y(y)\|\implies d(h(y),h(y'))\leq \epsilon\|X(h(y))\|.
$$
Then, if $F$ is rescaling $(t,\delta)$-spanning for $Y$,
$h(F)$ is rescaling $(t,\epsilon)$-spanning for $X$ thus
$R^*(t,\epsilon,X)\leq R^*(t,\delta,Y)$ for all $t\geq0$.
From this we obtain $e^*(X)\leq e^*(Y)$.
Reversing the roles of $X$ and $Y$ we obtain $e^*(Y)\leq e^*(X)$
proving the result.

Finally, we prove Item (6).
Suppose that $Sing(X)$ is dynamically isolated.
Then, there is $\delta>0$ such that every periodic orbits of $X$ intersects
$M_\delta$.
We now follow closely the proof of Theorem 5 in \cite{bw} with the aid of Theorem 1.1 in \cite{wy}.

Given $t,\beta>0$ let $v_\beta(t)$ denote the number of different periodic orbits with periods belonging to the closed interval $[t-\beta,t+\beta]$.
Let $\alpha>0$ be given by Theorem 1.1-(v) in \cite{wy} for $\epsilon=1$.

We claim that
\begin{equation}
\label{poro}
v_{\frac{\alpha}2}(t)\leq S^*(t,\alpha,M_\delta),\quad\quad\forall t>0.
\end{equation}

Indeed, fix $t>0$.
By selecting one point for each periodic orbit whose period belongs to the closed interval $[t-\frac{\alpha}2,t+\frac{\alpha}2]$ we form a subset $E\subset M^*$ such that $card(E)=v_{\frac{\alpha}2}(t)$.
Since every periodic orbit intersects $M_\delta$, we can further assume that $E\subset M_\delta$.

Let us prove that $E$ is rescaled $(t,\alpha,M_\delta)$-separating.
Otherwise, there would exist distinct $x,y\in E$ such that
$$
\frac{d(\varphi_s(x),\varphi_s(y))}{\|X(\varphi_s(x))\|}<\alpha,\quad\quad\forall 0\leq s\leq t.
$$
Let $a$ and $b$ be the periods of $x$ and $y$ respectively so
$a,b\in [t-\frac{\alpha}2,t+\frac{\alpha}2]$, $\varphi_a(x)=x$ and $\varphi_b(y)=y$.
Define $m=[\frac{t-\frac{\alpha}2}\alpha]$ and the sequences
$(t_i)_{i\in\mathbb{Z}},(u_i)_{i\in\mathbb{Z}}$ by
$$
t_i=pa+q\alpha\quad\mbox{ and }\quad u_i=pb+q\alpha
$$
whenever $i=pm+q$ for some $p\in \mathbb{Z}$ and $0\leq q<m.$
Since $0\leq q\alpha\leq t$ for $0\leq q<m$,
it follows that
$$
\frac{d(\varphi_{t_i}(x),\varphi_{u_i}(y))}{\|X(\varphi_{t_i}(x))\|}=\frac{d(\varphi_{q\alpha}(x),\varphi_{q\alpha}(y))}{\|X(\varphi_{q\alpha}(x))\|}<\alpha,\quad\quad\forall i\in\mathbb{Z}.
$$
Then, Theorem 1.1-(v) in \cite{wy} provides $t\in [-1,1]$ such that $\varphi_{u_0}(y)=\varphi_t(\varphi_{t_0}(x))$. It follows that
$x$ and $y$ belong to the same orbit. However, $x$ and $y$ are distinct and so they are in different orbits by construction, a contradiction. This contradiction proves that $E$ is $(t,\alpha,M_\delta)$-separating. It follows that
$card(E)\leq S^*(t,\alpha,M_\delta))$ and, since $card(E)=v_{\frac{\alpha}2}(t)$, we get \eqref{poro}.

It follows that
$$
v(t)\leq \sum_{n=1}^{[\frac{t}\alpha]}v_{\frac{\alpha}2}(n\alpha)\overset{\eqref{poro}}{\leq} \sum_{n=1}^{[\frac{t}\alpha]} S^*(n\alpha,\alpha,M_\delta),\quad\quad\forall t\geq0.
$$
Now, $n\alpha\leq t$ whenever $n=1,2,\cdots, [\frac{t}\alpha]$
and $S^*(s,\alpha,M_\alpha)$ does not decreases as $s$ increases so
$S^*(n\alpha,\alpha,M_\delta)\leq S^*(t,\alpha,M_\delta)$ for all such $n$'s thus
$$
v(t)\leq \frac{t}\alpha S^*(t,\alpha,N_\delta),\quad\quad\forall t\geq0.
$$
Therefore,
\begin{eqnarray*}
\limsup_{t\to\infty}\frac{1}t\log v(t)&\leq& \limsup_{t\to\infty}\left(\frac{\log t}t-\frac{\log\alpha}t+\frac{1}t\log S^*(t,\alpha,M_\delta)\right)\\
&=&\limsup_{t\to\infty}\frac{1}t\log S^*(t,\alpha,M_\delta)\\
&\leq& e^*(X) \quad\quad\quad\quad\quad\quad\mbox{(by Lemma \ref{rasca})}
\end{eqnarray*}
completing the proof.
\qed

\medskip

We finish with a commentary. An
anonymous referee raised the concern that the example presented in the above proof appears somewhat artificial. In response, we present one more example of positive rescaling topological entropy in $S^2$:

\begin{ex}
\label{eno}
The $C^\infty$ vector field of $S^2$ defined by
$$
X^0(x,y,z)=
\left\{\begin{array}{rcl}
(x^2+y^2)e^{-\frac{1}{x^2+y^2}}(xz,yz,-x^2-y^2), & \mbox{ if } (x,y)\neq (0,0)\\
& & \\
0, &\mbox{ otherwise}
\end{array}
\right.
$$
can be $C^\infty$ approximated by ones with positive rescaled topological entropy.
\end{ex}

\begin{proof}
Note that
$$
X^0(x,y,z)=\rho_0(z)(xz,yz,-x^2-y^2),\quad\quad\forall (x,y,z)\in S^2,
$$
where $\rho_0:\mathbb{R}\to\mathbb{R}$ is the $C^\infty$ function
$$
\rho_0(z)=
\left\{\begin{array}{rcl}
(1-z^2)e^{-\frac{1}{1-z^2}}, & \mbox{ if } & z\in (-1,1)\\
0, & \mbox{ if } & z\in \{-1,1\}.
\end{array}
\right.
$$
The graph of $\rho_0$ is depicted in the left-hand side of Figure \ref{fig2}.
Note that it is tangent to the $z$ axis at $z=\pm 1$. From this
we can $C^\infty$ approximated it by $\rho(z)$ having two additional zeroes $-1<b<a<0<1$ as indicated in the right-hand side of that figure.

Define the $C^\infty$ vector field $X$ of $S^2$ by
$$
X(x,y,z)=\rho(z)(xz,yz,-x^2-y^2),\quad\quad\forall (x,y,z)\in S^2.
$$

Clearly $X$ is $C^\infty$ close to $X^0$.
Its dynamics can be described as follows: First of all
$$
Sing(X)=\{\sigma_+,\sigma_-\}\cup\{(x,y,a):x^2+y^2=1-a^2\}\cup\{(x,y,b):x^2+y^2=1-b^2\}
$$
where $\sigma_\pm=(0,0,\pm 1)$ are the poles of $S^2$.
All trajectories are contained in vertical planes.
Those over the plane $z=a$ go from $\sigma_+$ to one of the equilibrium points in $\{(x,y,a):x^2+y^2=1-a^2\}$; those in between the planes $z=a$ and $z=b$ go from one equilibrium in $\{(x,y,a):x^2+y^2=1-a^2\}$
to one in $\{(x,y,a):x^2+y^2=1-b^2\}$ and, finally, those below
$z=b$ go from one equilibrium in $\{(x,y,a):x^2+y^2=1-b^2\}$ to $\sigma_-$.

Next, choose $\delta>0$ small enough so that
$\rho'(z)\in [\rho'(a),\rho'(-\delta)]\subset (0,\infty)$ for $z\in [a,-\delta]$.
Then,
\begin{equation}
\label{weissman}
\rho'(z)(z^2-1)\leq \rho'(a)(a^2-1),\quad\quad\forall z\in [a,-\delta].
\end{equation}
Take $K=\{(x,y,z)\in M:z=-\delta\}$ the parallel circle at level $-\delta$ so $K$ is compact contained in $M^*$.

Divide $K$ into arcs of equal angular length $\frac{\pi}{2^{n-1}}$.
Let $E_n\subset K$ be the set of endpoints of these arcs. Then $E_n$ consists of $2^n$ evenly spaced points on $K$.

It follows that
\begin{equation}
\label{pelo1}
d(\varphi_t(p),\varphi_t(q))\geq \frac{1}{\sqrt{2}}(1-a^2)\frac{\pi}{2^{n-1}},\quad\quad\forall n\in\mathbb{N},\, p\neq q\in E_n,\,t\geq0,
\end{equation}
where $\varphi_t$ is the flow of $X$.

Given $p\in K$ we write the solution $\varphi_t(p)=(x(t),y(t),z(t))$ for $t\in\mathbb{R}$.
The function
$$
u(t)=\rho(z(t))\quad\quad\mbox{ for }\quad\quad t\geq0
$$
satisfies
$u'(t)=\rho'(z(t))z'(t)$ and, since
$$
z'(t)=(-x^2(t)-y^2(t))\rho(z(t))=(z^2(t)-1) \rho(z(t)),
$$
$u(t)$ satisfies the ODE
$$
u'(t)=\rho'(z(t))(z^2-1)u(t).
$$
So,
$$
\rho(z(t))=\rho(z(0))e^{\int_0^t\rho'(z(s))(z^2(s)-1)ds}.
$$
But
$a\leq z(s)\leq -\delta$ for $0\leq s\leq t$ and $t\geq0$ so
\eqref{weissman} yields
$$
\rho(z(t))\leq \rho(-\delta)e^{-\gamma t},\quad\quad\forall t\geq0,
$$
where $\gamma=\rho'(a)(1-a^2)>0$.

\begin{figure}
  \includegraphics[width=200pt]{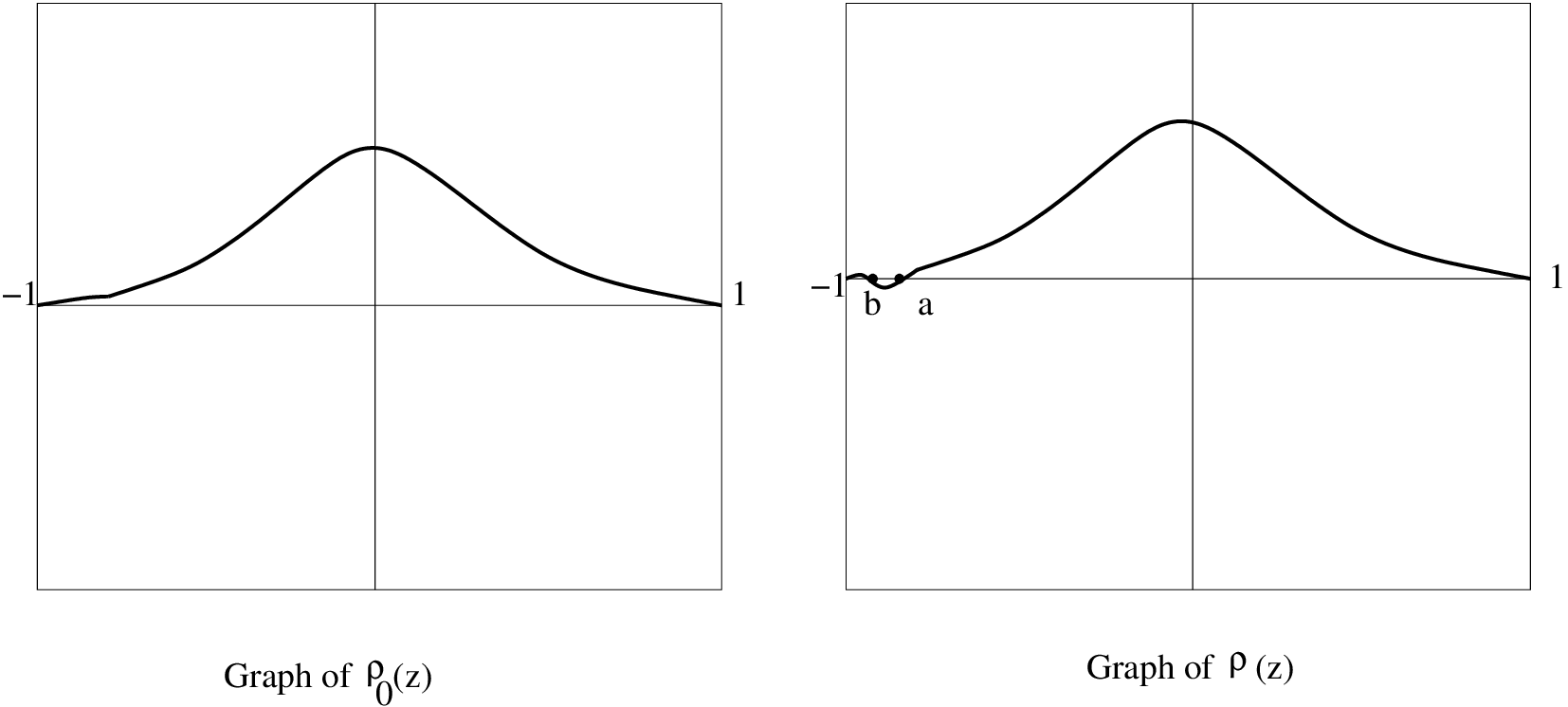}
 \caption{Graphs of $\rho_0$ and $\rho$}
  \label{fig2}
\end{figure}

Thus,
\begin{equation}
\label{pelo2}
\|X(\varphi_t(p))\|=\rho(z(t))\sqrt{1-z^2(t)}\leq \kappa e^{-\gamma t},\quad\quad\forall t\geq0,
\end{equation}
where $\kappa=\rho(-\delta)\sqrt{1-\delta^2}>0$.

Now take $t_n=\gamma^{-1}n$ for $n\in \mathbb{N}$.
We have from \eqref{pelo1} and \eqref{pelo2} that
$$
d^*_{t_n}(p,q)>\sqrt{2}\kappa^{-1}(1-a^2)\pi,\quad\quad\forall n\in\mathbb{N},\, p\neq q\in E_n.
$$
Consequently,
$$
\inf_{n\in\mathbb{N}}\inf_{p\neq p\in E_n}d^*_{t_n}(p,q)>0.
$$
On the other hand, $card(E_n)=2^n$ so
$$
\limsup_{n\to\infty}\frac{1}{t_n}\log card(E_n)=\gamma\log 2>0.
$$
Since $t_n\to\infty$, we obtain $e^*(X)>0$ from Proposition \ref{naro}.
\end{proof}

\section*{Funding}

\noindent
ER was partially supported by the European Union's European Research Council Marie Sklodowska-Curie grant No 101151716.
XW was partially supported by NSFC12071018 and the Fundamental Research Funds for the Central Universities.

\section*{Declaration of competing interest}
\label{Three}

\noindent
There is no competing interest.

\section*{Data availability}
\label{Four}

\noindent
No data was used for the research described in the article.

\end{document}